\newtheorem{thm}{Theorem}[section]
\newtheorem{lem}[thm]{Lemma}
\newtheorem{cor}[thm]{Corollary}
\theoremstyle{remark}
\newtheorem{rem}[thm]{Remark}
\newcommand{\la}{{\lambda}}
\newcommand{\si}{{\sigma}}
\newcommand{\wt}{\tilde}
\begin{document}





\begin{center}

\renewcommand{\thefootnote}{\fnsymbol{footnote}}
{\Large \bf Description of the characters and factor representations
of the infinite symmetric inverse semigroup%
\footnote{Partially supported by the RFBR grants 08-01-00379-a and 09-01-12175-ofi-m.}.}\\[0.5in]
\vspace{2pt}%
\setcounter{footnote}{0}

{\large\bf A.~M.~Vershik, P.~P.~Nikitin}
\\[6pt]
St.~Petersburg Department \\
of the Steklov Mathematical Institute\\
27, Fontanka, 191023 St.Petersburg, Russia \\[4pt]
E-mail: {vershik@pdmi.ras.ru, pnikitin0103@yahoo.co.uk}
\end{center}

\begin{abstract}
We give a complete list of indecomposable characters of the infinite
symmetric semigroup. In comparison with the analogous list for the
infinite symmetric group, one should introduce only one new parameter,
which has a clear combinatorial meaning. The paper relies on the
representation theory of the finite symmetric semigroups and the
representation theory of the infinite symmetric group.
\end{abstract}

\section*{Introduction}

In this paper, we describe the characters of the infinite
symmetric semigroup. The main result establishes a link between the
representation theory of the finite symmetric semigroups developed by
Munn \cite{Munn_symm-inverse-semigroup}, \cite{Munn_inverse=semisimple},
Solomon \cite{Solomon},
Halverson \cite{Halverson}, Vagner \cite{Vagner}, Preston
\cite{Preston}, and Popova \cite{Popova} on the one hand, and the representation
theory of locally finite groups (in particular, the infinite symmetric
group) and locally semisimple algebras developed in the papers by Thoma
\cite{Thoma}, Vershik and Kerov
\cite{VK}--\cite{VK_survey2}, \cite{VK_survey} on the other hand.
The below analysis of the Bratteli diagram for the infinite
symmetric semigroup reminds the analogous analysis in the
more complicated case of describing the characters of the
Brauer--Weyl algebras \cite{VershikNikitin}. The symmetric semigroup
appeared not only in the literature on the theory of semigroups and their
representations, but also in connection with the representation theory of the
infinite symmetric group \cite{Olshansky} and the definition of the braid
semigroup \cite{Vershinin}; $q$-analogs of the symmetric semigroup were
also considered \cite{Halverson}. Apparently, the definition of the
infinite symmetric semigroup given in this paper, as well as problems
related to representations of this semigroup, have not yet been discussed in the
literature.

Consider the set of all one-to-one
\textit{partial} transformations of the set
$\{1,\dots,n\}$, i.e., one-to-one maps from a subset of
$\{1,\dots,n\}$ to a subset (possibly, different from the first one) of
$\{1,\dots,n\}$. We define the product of such maps as their
composition where it is defined. Thus we obtain a
semigroup with a zero (the map with the empty domain of definition),
which is usually called the \textit{symmetric inverse semigroup};
denote it by $R_n$
(there are also other notations, see \cite{CliffordPreston}, \cite{Solomon}).

Obviously, the symmetric group $S_n$ is a subgroup of the semigroup
$R_n\colon S_n\subset
R_n$. Further, $R_n$ can be presented as the semigroup of all
$0$-$1$ matrices with at most one $1$ in each row and each column
equipped with
matrix multiplication. This realization is similar to the natural
representation of the symmetric group. The matrices of this form are
in a one-to-one correspondence with all possible placements of
nonattacking rooks on the
$n\times n$ chessboard, that is why Solomon called this monoid (the
semigroup with a zero)
the \textit{rook monoid}.

The following properties of inverse semigroups and, in particular, the
symmetric inverse semigroup are of great importance (see the Appendix).

(1) the complex semigroup algebra of every finite inverse semigroup is
semisimple
(\cite{Oganesyan}, \cite{Munn_inverse=semisimple});

(2) every finite inverse semigroup can be isomorphically embedded into
a sym\-met\-ric  inverse semigroup
(\cite{Vagner}, \cite{Preston});

(3) the class of finite inverse semigroups generates exactly the class
of involutive semisimple bialgebras
\cite{Vershik_bialgebga}.

The following result, which describes the characters of a finite
inverse semigroup, was essentially discovered by several authors; its
combinatorial and dynamical char\-ac\-ter\-iza\-tion is given in
\cite{Halverson}.

The set of irreducible representations (and, consequently, the set of
irreducible characters) of the symmetric semigroup
$R_n$ is indexed by the set of all Young diagrams with at most
$n$ cells. The branching of representations in terms of diagrams looks
as follows: when passing from an irreducible representation of
$R_n$ to rep\-re\-sen\-ta\-tions of $R_{n+1}$, the corresponding Young diagram
either does not change, or obtains one new cell (grows).

The infinite symmetric group
$S_{\infty}$ is the countable group of all finitary (i.e.,
nonidentity only on a finite subset) one-to-one transformations of a countable
set. In the same way one can define the
\textit{infinite symmetric inverse semigroup}\footnote{Usually we omit the
word ``inverse'' and speak about the (infinite) symmetric
semigroup.} $R_\infty$ as the set of partial one-to-one transformations
of a countable set that are
nonidentity only on a finite subset.\footnote{Thus the infinite
symmetric inverse semigroup does not contain the zero map, since every
element must be identity on the complement of a finite set.}
The group $S_{\infty}$ is the inductive limit of the chain
$S_n$, $n=1,2,\dots$, with the natural embeddings of groups. In the
same way, the semigroups $R_n$, $n=1,2,\dots$, form a chain with
respect to the natural monomorphisms of semigroups\footnote{Under the
monomorphism $R_n\subset
R_{n+1}$, the zero of $R_n$ is mapped not to a zero, but to a certain
projection; more exactly, to the generator
$p_n\in R_{n+1}$, see Theorem~\ref{thm:gens&rels_Halverson}.}
$R_0\subset R_1\subset\dots\subset
R_n\subset\dots$, and its inductive limit is the infinite inverse
symmetric semigroup. The connection between the Bratteli diagram of
the infinite symmetric group (the Young graph) and that of the infinite
symmetric inverse semigroup leads naturally to introducing a new operation on
graphs, which associates with every Bratteli diagram its
``slow'' version. (Cf.\ the notion of the ``pascalization'' of a graph
introduced in
\cite{VershikNikitin}.)

Our results rely on the well-developed representation theory of the
infinite sym\-met\-ric group $S_\infty$ and, to some extent, generalize it.
Recall that the list of characters of the
infinite symmetric group was found by Thoma
\cite{Thoma}. The new proof of Thoma's theorem suggested by Vershik
and Kerov \cite{VK} was based on approximation of char\-ac\-ters of the
infinite symmetric group by characters of
finite symmetric groups and used the combinatorics of Young diagrams,
which, as is well known, parameterize the irreducible complex representations
of the finite symmetric groups. The parameters of indecomposable
characters in the exposition of \cite{VK} are interpreted as the
fre\-quen\-cies of the rows
and columns of a sequence of growing Young diagrams.
\textit{The main result of this paper is that the list of parameters
for the characters of the
infinite symmetric group is obtained from the list of Thoma
parameters by adding a new number from the interval
$[0,1]$}. The meaning of this new parameter is as follows. The
irreducible representations of the finite symmetric
semigroup $R_n$ are also parameterized by Young diagrams, but with an arbitrary number
of cells $k$ not exceeding $n$; thus,
apart from the limiting frequencies of rows and columns, a sequence of growing diagrams has another
parameter: the limit of the ratio
$k/n$, which is the relative velocity with which the corresponding
path passes through the levels of the
branching graph; or, in other words, the
deceleration of the rate of approximation of a character of the
infinite semigroup by characters of finite semigroups.

The description of the characters allows us to
construct a realization of the corresponding representations. They live in the same
space as the corresponding representations of the infinite symmetric group.
More
exactly, the space of the representation is constructed in exactly the same way as in the
model of factor representations of the infinite symmetric group
suggested in
\cite{VK_factor}, but with the extended list of parameters, see
Theorem~\ref{thm:realization}.

In the first section, we give the necessary background on the representation
theory of the finite symmetric inverse semigroups. The second section
is devoted to the representation
theory of the infinite symmetric semigroup
$R_\infty$ and contains our main results. In Appendix we collect
general information about finite inverse semigroups and some new facts
about their semigroup algebras regarded as Hopf algebras.

\section{The representation theory of the finite symmetric inverse semigroups}

\subsection{The semisimplicity of the semigroup algebra ${\mathbb{C}}[R_n]$. 
The complete list of irreducible representations}

 We define the  \textit{rank} of a map $a\in R_n$
as the number of elements on which this map is not defined. Each of
the sets $A_r = \{a\in R_n\mid$ the rank of $a$ is at least $r\}$
for $0 \le r \le n$ is an ideal of the semigroup
$R_n$. The chain of ideals
$$
R_n = A_0 \supset A_1 \supset \dots \supset A_n
$$
is a principal series of the semigroup
$R_n$, i.e., there is no ideal lying strictly between
$A_r$ and
$A_{r+1}$, see Theorem~\ref{thm:CC[R_n] isom oplus}.

Denote by ${\mathbb{C}}[S_n]$ the complex group algebra of the symmetric group
$S_n$. This algebra, as well as the group algebra of every finite group, is
semisimple, since in it there exists an invariant inner product.

The complex semigroup algebra of an inverse group is always semisimple
too, as follows from the general Theorem~\ref{thm:inverse_semisimple}.
An explicit decomposition of the algebra ${\mathbb{C}}[R_n]$ into matrix
components was suggested by Munn \cite{Munn_symm-inverse-semigroup}.

\begin{thm}[Munn]\label{thm:CC[R_n] isom oplus}
The algebra ${\mathbb{C}}[R_n]$ is semisimple and has the form
$$
{\mathbb{C}}[R_n] \cong \bigoplus_{r=0}^n M_{\binom{n}{r}} ({\mathbb{C}}[S_r]).
$$
\end{thm}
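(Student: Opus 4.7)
The plan is to write down an explicit isomorphism by unpacking each $a \in R_n$ into the data $(D(a), R(a), \pi_a)$, where $D(a)$ and $R(a)$ are the domain and range of $a$ and $\pi_a \colon D(a) \to R(a)$ is the underlying bijection. Fix once and for all an order-preserving bijection $\iota_I \colon I \xrightarrow{\sim} \{1,\ldots,|I|\}$ for every subset $I \subseteq \{1,\ldots,n\}$, so that any bijection between two $r$-subsets becomes an element of $S_r$. For each $r$, define on basis elements
$$
\rho_r(a) \;=\; \sum_{I \subseteq D(a),\ |I|=r} E_{I, a(I)} \otimes \bigl(\iota_{a(I)} \circ a|_I \circ \iota_I^{-1}\bigr),
$$
where $E_{I,J}$ are the matrix units of $M_{\binom{n}{r}}$ indexed by the $r$-subsets; crucially the sum runs over \emph{all} $r$-subsets of the domain, not only the case $|D(a)|=r$. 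One checks that each $\rho_r \colon \mathbb{C}[R_n] \to M_{\binom{n}{r}}(\mathbb{C}[S_r])$ is an algebra homomorphism: in the product $\rho_r(a)\rho_r(b)$ the matrix-unit product $E_{I,a(I)}\,E_{J,b(J)}$ survives only when $a(I)=J$, and the surviving index set bijects with the $r$-subsets of $D(ab)$, with the group-algebra entries composing into $\pi_{ab}|_I$. Let $\rho = \bigoplus_{r=0}^n \rho_r$ be the candidate isomorphism.

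Injectivity of $\rho$ is the heart of the matter and exploits the rank filtration $R_n = A_0 \supset A_1 \supset \cdots \supset A_n$. Given $\alpha = \sum c_a a \in \ker \rho$, let $r_0$ be the \emph{maximum} of $|D(a)|$ over the support. No basis element of strictly larger domain-size lies in the support, and any basis element of strictly smaller domain-size is killed by $\rho_{r_0}$ because the defining sum is empty. The remaining terms with $|D(a)|=r_0$ each contribute the single matrix unit $E_{D(a),R(a)} \otimes \pi_a$ with coefficient $c_a$; since the triple $(D(a),R(a),\pi_a)$ determines $a$, these matrix units are linearly independent in $M_{\binom{n}{r_0}}(\mathbb{C}[S_{r_0}])$, so $\rho_{r_0}(\alpha)\ne 0$. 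This forces all such $c_a$ to vanish, contradicting the choice of $r_0$. A dimension count $|R_n| = \sum_r \binom{n}{r}^2 r! = \sum_r \dim M_{\binom{n}{r}}(\mathbb{C}[S_r])$ then makes $\rho$ an isomorphism; semisimplicity of $\mathbb{C}[R_n]$ is immediate from semisimplicity of the right-hand side, matching the general fact for finite inverse semigroups alluded to before the theorem.

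The main obstacle is the correct design of $\rho_r$. The naive choice that sends $a$ to a single matrix unit when $|D(a)|=r$ and to zero otherwise fails to be multiplicative, because the product of two partial bijections whose rank (in the paper's sense) equals $n-r$ generically has strictly smaller rank; summing over all $r$-subsets of $D(a)$ is precisely what absorbs this defect. This choice also lets the rank-filtration argument run \emph{from the top down}, avoiding any interference from higher-rank generators with the chosen rank $r_0$; a bottom-up injectivity proof would instead require a Möbius-inversion step over the semilattice of partial identities $\{\varepsilon_I : I \subseteq \{1,\ldots,n\}\}$.
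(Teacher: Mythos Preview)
The paper does not supply its own proof of this statement; it is quoted as Munn's theorem with a citation to his 1957 paper. Your argument is correct. By way of comparison with the classical approach (hinted at by the principal series $R_n = A_0 \supset A_1 \supset \cdots \supset A_n$ recalled just before the theorem): Munn proceeds factor by factor, identifying each Rees quotient $\mathbb{C}[A_r]/\mathbb{C}[A_{r+1}]$ with a matrix algebra over a group algebra $\mathbb{C}[S_{n-r}]$ via the general theory of completely $0$-simple semigroups, and then assembles the pieces. You instead write down a single global map $\rho = \bigoplus_r \rho_r$ and invoke the filtration only to prove injectivity. Your route yields an explicit formula for the isomorphism and a clean explanation of why summing over \emph{all} $r$-subsets of the domain (rather than restricting to $|D(a)|=r$) is forced by multiplicativity; the principal-series route has the advantage of applying verbatim to every finite inverse semigroup whose maximal subgroups have semisimple group algebras, which is the content of the general semisimplicity theorem in the Appendix.
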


Here $M_l(A)$ is the algebra of matrices of order $l$ over an algebra $A$.
A description of the representations of the algebra
${\mathbb{C}}[R_n]$ is given by the following theorem.

\begin{thm}[Munn]
Let $S$ be a semigroup isomorphic to the semigroup $M_n(G)$ of $n\times n$
matrices with elements from a group $G$. Let $F$ be a field whose
characteristic is equal to zero or is a prime not dividing the order of $G$.
Let $\{\gamma_p\}_{p=1}^k$ be the complete list of non\-equiv\-a\-lent
irreducible representations of the group $G$ over $F$.
Denote by $\gamma_p'$ the map given by the formula
$$
\gamma_p'(\{x_{ij}\}) = \{\gamma_p(x_{ij})\}
$$
for every matrix $\{x_{ij}\}\in S=M_n(G)$. Then
$\{\gamma_p'\}_{p=1}^k$ is the complete list of non\-equiv\-a\-lent
irreducible representations of the semigroup
$S$ over $F$.
\end{thm}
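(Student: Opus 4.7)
My strategy will be to reduce the classification of irreducibles of $S$ to the classical Wedderburn--Morita description of modules over a full matrix algebra. Since $S=M_n(G)$ has an absorbing zero $0$ (the zero matrix), I first identify the contracted semigroup algebra $F_0[S]:=F[S]/F\cdot 0$ with $M_n\bigl(F[G]\bigr)$. Writing $e_{ij}(g)$ for the element of $S$ whose only nonzero entry is $g\in G$ in position $(i,j)$, the multiplication rule $e_{ij}(g)\,e_{kl}(h)=\delta_{jk}\,e_{il}(gh)$ matches the rule for the matrix units $E_{ij}\otimes g\in M_n(F[G])$, so extending linearly yields an $F$-algebra isomorphism $F_0[S]\cong M_n\bigl(F[G]\bigr)$.

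Next I would exploit this identification. The hypothesis on $\operatorname{char} F$ makes $F[G]$ semisimple by Maschke, and Morita equivalence then produces a bijection between the irreducible $F[G]$-modules $\{V_p\}_{p=1}^{k}$ (the spaces of $\gamma_p$) and the irreducible $M_n(F[G])$-modules, sending $V_p\mapsto F^n\otimes V_p$; a matrix $\{a_{ij}\}\in M_n(F[G])$ acts on $F^n\otimes V_p$ as the block matrix with blocks $\gamma_p(a_{ij})$. Pulling this back via $F_0[S]\cong M_n(F[G])$, the action of a semigroup element $\{x_{ij}\}\in S$ on $F^n\otimes V_p$ is the block matrix $\{\gamma_p(x_{ij})\}$, with the convention $\gamma_p(0)=0$ --- which is exactly the representation $\gamma_p'$ defined in the statement.

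To finish, I would show that every irreducible representation of $S$ factors through $F_0[S]$. The absorbing property of $0$ gives $\rho(0)^2=\rho(0)$ and $\rho(0)\rho(s)=\rho(s)\rho(0)=\rho(0)$ for all $s$, so by Schur's lemma $\rho(0)\in\{0,I\}$. The case $\rho(0)=I$ forces $\rho(s)=I$ for every $s$ --- the trivial one-dimensional representation conventionally excluded for semigroups with zero; otherwise $\rho$ descends to $F_0[S]\cong M_n(F[G])$ and is therefore equivalent to some $\gamma_p'$.

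The main obstacle, beyond the minor bookkeeping around the zero element, is the structural identification $F_0[S]\cong M_n\bigl(F[G]\bigr)$. Once this isomorphism is in place, the theorem follows from standard Wedderburn--Morita theory, and the entrywise formula $\gamma_p'(\{x_{ij}\})=\{\gamma_p(x_{ij})\}$ emerges naturally from the action of block matrices on $F^n\otimes V_p$.
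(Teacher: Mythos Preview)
The paper does not prove this theorem; it is quoted as a result of Munn and used without argument, so there is no proof in the paper to compare against.

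Your proof is correct and is the standard route to Munn's result. The key identification of the contracted semigroup algebra $F_0[S]$ with $M_n(F[G])$ via the matrix-unit correspondence $e_{ij}(g)\leftrightarrow E_{ij}\otimes g$ is exactly right (and implicitly underlies Theorem~\ref{thm:CC[R_n] isom oplus} as well), after which Maschke's theorem and Morita equivalence---or, equivalently, the Wedderburn decomposition of $M_n(F[G])$---produce precisely the $k$ irreducibles $\gamma_p'$ in the block form stated. Your handling of the zero element is also sound: the dichotomy $\rho(0)\in\{0,I\}$ holds over any field, since the image of an idempotent intertwiner is a submodule and hence $0$ or the whole space by irreducibility; the case $\rho(0)=I$ yields only the degenerate one-dimensional all-ones representation, which is the one conventionally excluded when one speaks of representations of a semigroup with zero (equivalently, one works with $F_0[S]$ from the start).
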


Denote by $\mathcal{P}_r$ the set of all partitions of a positive
integer $r$. It follows from the previous theorem that the set of
irreducible representations of the semigroup $R_n$ can be naturally
indexed by the set $\bigcup_{r=0}^n \mathcal{P}_r$.

\begin{rem}\label{rem:ind-repres}
As can be seen from the form of irreducible representations of the
semigroup $R_n$ described above, each such representation is an
extension of a uniquely defined induced representation of the group
$S_n$. More exactly, for the irreducible representation of
$R_n$ corresponding to a partition
$\lambda\in\mathcal{P}_r$, consider the representation of the subgroup
$S_r\times S_{n-r}\subset S_n$ in which the action of
$S_r$ corresponds to the partition $\lambda$ and
$S_{n-r}$ acts trivially. The corresponding induced representation of
$S_n$ can be extended to the original irreducible representation of
$R_n$. (This was also observed in~\cite{Olshansky}.)
\end{rem}

\begin{rem}\label{rem:involution}
On the semigroup algebra ${\mathbb{C}}[R_n]$
of the symmetric semigroup, as well as on the group algebra ${\mathbb{C}}[S_n]$
of the symmetric group, there is an involution, which, in particular, sends every
irreducible representation $\pi$ to the representation sgn$\pi$.
It corresponds to the natural involution on the Young
graph and, consequently, of the slow Young graph (for the definition,
see Section~2.1) that sends a diagram to its reflection
in the diagonal. However, it is not
an involution of the group $S_n$ or the semigroup $R_n$.
\end{rem}

\subsection{A formula for the characters of the finite symmetric semigroup}
Munn \cite{Munn_symm-inverse-semigroup} also found a formula that
expresses the characters of the symmetric inverse semigroup in terms
of characters of the symmetric groups. In order to state the corresponding
theorem, for every subset
$K\subset \{1,\dots,n\}$, $|K| = r$, fix an arbitrary partial bijection
$\mu_K\colon K\mapsto\{1,\dots,r\}$. By
$\mu_K^{-}\colon\{1,\dots,r\}\mapsto K$ we denote the map
inverse to $\mu_K$ on $K$; thus $\mu_K^{-} \circ \mu_K$ is the identity map
on the set $K$.

\begin{thm}[Munn]\label{thm:chi_R-n}
Let $\chi^*$ be the character of the irreducible representation of the
semigroup $R_n$ corresponding to a partition
$\lambda\in\mathcal{P}_r$, $1\le r\le n$. Let $\chi$ be the corresponding
character of the symmetric group
$S_r$. Then for every element $\si\in R_n$,
$$
\chi^*(\si) = \sum \chi(\mu_K\si\mu_K^{-}),
$$
where the sum is taken over all subsets $K$ of the domain of
definition of
$\si$ such that $|K|=r$ and $K\si=K$.
\end{thm}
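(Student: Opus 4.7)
The plan is to deduce the formula from the explicit form of Munn's decomposition in Theorem~\ref{thm:CC[R_n] isom oplus} and then read off the character as a trace. For each $r$ with $0 \le r \le n$, I would realize the $r$-th component of the Munn isomorphism $\Phi_r\colon {\mathbb{C}}[R_n] \to M_{\binom{n}{r}}({\mathbb{C}}[S_r])$ on a basis element $\sigma \in R_n$ by specifying its $(A,B)$-entry, indexed by $r$-element subsets $A,B$ of $\{1,\dots,n\}$:
$$
(\Phi_r(\sigma))_{AB} \;=\; \begin{cases} \mu_A\,\sigma\,\mu_B^{-} \in S_r & \text{if } B \subseteq \mathrm{dom}(\sigma) \text{ and } \sigma(B) = A, \\ 0 & \text{otherwise.}\end{cases}
$$
A direct computation shows $\Phi_r(\sigma\tau) = \Phi_r(\sigma)\Phi_r(\tau)$: in the matrix product $\sum_C (\Phi_r(\sigma))_{AC}(\Phi_r(\tau))_{CB}$ at most one $C$ contributes, namely $C = \tau(B)$ when $\sigma\tau$ maps $B$ bijectively onto $A$, and the product $\mu_A\sigma\mu_C^{-}\mu_C\tau\mu_B^{-}$ collapses to $\mu_A(\sigma\tau)\mu_B^{-}$ because $\mu_C^{-}\mu_C$ is the identity on $C$. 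Summing over $r$ then gives the isomorphism of Theorem~\ref{thm:CC[R_n] isom oplus}.

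Next, by the preceding Munn theorem on matrix semigroups, the irreducible representation of $M_{\binom{n}{r}}({\mathbb{C}}[S_r])$ indexed by $\lambda \in \mathcal{P}_r$ is $\pi^{\lambda}(\{x_{AB}\}) = \{\gamma^{\lambda}(x_{AB})\}$, where $\gamma^{\lambda}$ denotes the $S_r$-irreducible representation with character $\chi$. Hence the irreducible representation of $R_n$ labelled by $\lambda$ is $\pi^{\lambda}\circ \Phi_r$, and its character equals
$$
\chi^{*}(\sigma) \;=\; \mathrm{tr}\bigl(\pi^{\lambda}(\Phi_r(\sigma))\bigr) \;=\; \sum_{A} \chi\bigl((\Phi_r(\sigma))_{AA}\bigr),
$$
with the convention $\chi(0) = 0$. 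The only non-zero summands arise from $r$-subsets $A \subseteq \mathrm{dom}(\sigma)$ with $\sigma(A) = A$, each contributing $\chi(\mu_A\sigma\mu_A^{-})$; this is precisely the claimed identity with $K = A$. Independence of the arbitrary choice of $\mu_K$ is automatic, since replacing $\mu_K$ by another bijection merely conjugates $\mu_K\sigma\mu_K^{-}$ inside $S_r$, leaving the class function $\chi$ invariant.

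The main obstacle is the bookkeeping required to show that $\Phi_r$ is a semigroup (hence algebra) homomorphism and that the components for different $r$ fit together into the Munn isomorphism; once this explicit form of the decomposition is in hand, the character formula reduces to the routine trace computation over diagonal blocks sketched above.
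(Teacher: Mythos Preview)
The paper does not supply a proof of this theorem; it is quoted as Munn's result and only \emph{used} later (notably in the proof of the character formula for $R_\infty$). Your argument is correct and is, in effect, a reconstruction of Munn's original reasoning: make the isomorphism of Theorem~\ref{thm:CC[R_n] isom oplus} explicit via the matrices $\Phi_r(\sigma)$, identify the irreducible labelled by $\lambda\in\mathcal P_r$ as $\gamma^\lambda$ applied entrywise to $\Phi_r$, and read off the trace along the diagonal. The check that $\Phi_r$ is multiplicative is fine, and your closing remark that the formula is independent of the choice of $\mu_K$ (since a different choice conjugates $\mu_K\sigma\mu_K^{-}$ inside $S_r$) is a worthwhile observation. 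The only loose end is the sentence ``Summing over $r$ then gives the isomorphism of Theorem~\ref{thm:CC[R_n] isom oplus}'': for the character formula itself you do not need to re-prove injectivity or surjectivity of $\bigoplus_r\Phi_r$, only that each $\Phi_r$ is a homomorphism and that the $\lambda$-irreducible factors through it, both of which you have.
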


\subsection{Presentations of the semigroup $R_n$ by generators and relations} 
We are interested in families of generators of the semigroups
$\{R_n\}_{n=0}^\infty$ that increase under the embeddings $R_n\subset
R_{n+1}$. This condition is satisfied for the generators suggested by
Popova \cite{Popova} and those suggested by Halverson
\cite{Halverson}. In Halverson's paper, the generators and relations
are described for a $q$-analog of the symmetric inverse semigroup. Below
we present the particular case of his result for
$q=1$.

Let $\si_i$, $1 \le i < n$, be the Coxeter generators of the group
$S_n$. By $p_i\in R_n$,
$1\le i\le n$, we denote the following maps: $p_i(j)$ is not defined if
$j\le i$, and $p_i(j) =
j$ if $j > i$.

\begin{thm}[Popova]\label{thm:gens&rels_Popova}
The semigroup $R_n$ is generated by the elements
$\si_1$, \dots, $\si_{n-1}$, $p_1$ with the following relations:

(1) the Coxeter relations for the group $S_n$;

(2) $\si_2 p_1 \si_2 = \si_2\si_3\cdots\si_{n-1} p_1 \si_2\si_3\cdots\si_{n-1} = p_1 = p_1^2$;

(3) $(p_1\si_1)^2 = p_1\si_1 p_1 = (\si_1 p_1)^2$.
\end{thm}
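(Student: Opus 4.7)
The plan is to show that the natural surjection $\phi\colon \widetilde R_n \twoheadrightarrow R_n$ from the abstract semigroup $\widetilde R_n$ presented by these generators and relations is a bijection. The easy half is verifying that relations (1)--(3) hold in $R_n$, which is a direct computation on partial bijections: (1) is the usual Coxeter calculus in $S_n$, $p_1^2=p_1$ is idempotency, the equalities in (2) express that $p_1$ is fixed under conjugation by elements generating the stabilizer of the point $1$ in $S_n$, and the equalities in (3) each reduce to the partial identity on $\{3,\dots,n\}$.

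For surjectivity of $\phi$, I would show the partial identity $p_k$ on $\{k+1,\dots,n\}$ can be built inductively as $p_k = p_{k-1}\cdot(\sigma_{k-1}\cdots\sigma_1)\,p_1\,(\sigma_1\cdots\sigma_{k-1})$, a product of two idempotents whose intersected domain is exactly $\{k+1,\dots,n\}$. Conjugating $p_k$ by a permutation then yields the partial identity $1_D$ on any $(n-k)$-subset $D$, and an arbitrary $a\in R_n$ of rank $k$ can be written as $\tau\cdot 1_D$, where $D$ is the domain of $a$ and $\tau\in S_n$ is any permutation extending the bijection $D\to a(D)$.

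The serious step is injectivity. Here the plan is to establish a normal form $s_1\,p_k\,s_2$ with $s_1,s_2\in S_n$ (represented by reduced words in the $\sigma_i$) and to show that every element of $\widetilde R_n$ reduces to such a form using only the given relations. Relations (2) let us commute $p_1$ past any permutation fixing $1$, which pushes the $p$-letters toward a canonical location; relation (3) is the crucial braid-like rule governing the interaction of $p_1$ with $\sigma_1$, and in particular iterated use of $\sigma_1 p_1 \sigma_1 p_1 = p_1\sigma_1 p_1$ merges two $p_1$'s separated by a $\sigma_1$ into a rank-$2$ idempotent. Counting the distinct normal forms modulo the stabilizer of $p_k$ under the two-sided $S_n$-action gives $\sum_{r=0}^n \binom{n}{r}^2 r! = |R_n|$; together with surjectivity this forces $\phi$ to be an isomorphism.

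The main obstacle is proving rigorously that the reduction to normal form always terminates and is well-defined, i.e., that the given relations suffice to bring an arbitrary word into canonical shape. A clean way to avoid an explicit confluence argument is to construct a set-theoretic action of $\widetilde R_n$ on the set of partial bijections of $\{1,\dots,n\}$ that is equivariant under $\phi$, and to exhibit normal-form representatives on which this action gives a section of $\phi$; this bounds $|\widetilde R_n|\le |R_n|$ without requiring the rewriting system to be globally confluent.
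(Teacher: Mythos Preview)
The paper does not give a proof of this theorem; it is quoted with attribution to Popova~\cite{Popova} and serves only as background for the description of $R_\infty$. There is therefore nothing in the paper to compare your attempt against.

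On the proposal itself: the overall architecture---verify that relations (1)--(3) hold in $R_n$, prove surjectivity by expressing an arbitrary partial bijection as $\tau\cdot 1_D$, and prove injectivity by reducing every word in $\widetilde R_n$ to a normal form $s_1 p_k s_2$ and counting---is the standard and correct approach to such presentation results. Two points deserve care. First, your gloss on relation~(2) as ``conjugation by elements generating the stabilizer of~$1$'' is not literally accurate for the second equality: $w=\sigma_2\cdots\sigma_{n-1}$ is not an involution, so $w p_1 w = p_1$ is not conjugation invariance, and in fact in $R_n$ one has $w p_1 w = w^2 p_1 \neq p_1$ for $n>3$; you should check Popova's original formulation, since the version reproduced here likely intends $w p_1 w^{-1}=p_1$ (equivalently, commutation of $p_1$ with the generators $\sigma_2,\dots,\sigma_{n-1}$ of the stabilizer). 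Second, your closing paragraph does not actually sidestep the rewriting argument. An action of $\widetilde R_n$ on partial bijections that is equivariant under $\phi$ is nothing more than $\phi$ followed by the regular action of $R_n$, and exhibiting a section of $\phi$ does not by itself bound $|\widetilde R_n|$ from above; you still need to prove that every element of $\widetilde R_n$ is equal in $\widetilde R_n$ to one of the normal-form representatives, and that is precisely the reduction lemma you are trying to avoid. The substance of the proof lies there, and it is where relations~(2) and~(3) do their real work.
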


\begin{thm}[Halverson]\label{thm:gens&rels_Halverson}
The semigroup $R_n$ is generated by the elements
$\si_1,\dots,\allowbreak\si_{n-1},p_1,\dots,p_n$
with the following relations:

(1) the Coxeter relations for the group $S_n$;

(2) $\si_i p_j = p_j \si_i = p_j$ for $1\le i < j \le n$;

(3) $\si_i p_j = p_j \si_i$ for $1 \le j < i \le n-1$;

(4) $p_i^2 = p_i$ for $1 \le i \le n$;

(5) $p_{i+1} = p_i \si_i p_i$ for $1 \le i \le n-1$.
\end{thm}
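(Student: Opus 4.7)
The plan is to verify directly from the definitions that the five Halverson relations hold in $R_n$ for the concrete partial maps $\sigma_i$ and $p_j$, and then to deduce sufficiency by reducing to Popova's presentation (Theorem~\ref{thm:gens&rels_Popova}). Checking the relations is routine partial-function bookkeeping. Relation (1) is the standard Coxeter presentation of $S_n\subset R_n$. For (2), when $i<j$ the support $\{i,i+1\}$ of $\sigma_i$ lies inside the set $\{1,\dots,j\}$ on which $p_j$ is undefined, so on the surviving points $\{j+1,\dots,n\}$ the reflection acts trivially, giving $\sigma_i p_j = p_j\sigma_i = p_j$. For (3), when $j<i$ the support of $\sigma_i$ is contained in the domain $\{j+1,\dots,n\}$ of $p_j$, and the two maps commute because they act on disjoint parts of that domain. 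Relation (4) is immediate since each $p_i$ is a partial identity. For (5), tracing $p_i\sigma_i p_i$ through its three factors: the inner $p_i$ restricts the domain to $\{i+1,\dots,n\}$, then $\sigma_i$ sends $i+1$ to $i$ and fixes $\{i+2,\dots,n\}$ pointwise, and the outer $p_i$ kills $i$, so the composition is the partial identity on $\{i+2,\dots,n\}$, which is $p_{i+1}$.

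Let $\widetilde R_n$ denote the abstract semigroup defined by Halverson's generators and relations, and let $\varphi\colon\widetilde R_n\to R_n$ be the canonical homomorphism. Surjectivity is immediate because Popova's theorem already yields $R_n=\langle\sigma_1,\dots,\sigma_{n-1},p_1\rangle$, while relation (5) recursively expresses $p_2,\dots,p_n$ in terms of $p_1$ and the $\sigma_i$. For injectivity I would construct an inverse by deriving each of Popova's defining relations inside $\widetilde R_n$. The Coxeter part is shared; the commutation $\sigma_i p_1=p_1\sigma_i$ for $i\ge 2$ is Halverson's~(3) with $j=1$, from which $\sigma_2 p_1\sigma_2=p_1\sigma_2^2=p_1$, and iterated application of the same commutation (together with the Coxeter relations) gives the remaining word identities of Popova's~(2). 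For Popova's~(3), use Halverson's~(5) to get $p_1\sigma_1 p_1=p_2$, and Halverson's~(2) with $i=1,\,j=2$ to get $\sigma_1 p_2=p_2\sigma_1=p_2$; then $(p_1\sigma_1)^2=(p_1\sigma_1 p_1)\sigma_1=p_2\sigma_1=p_2$ and symmetrically $(\sigma_1 p_1)^2=\sigma_1(p_1\sigma_1 p_1)=\sigma_1 p_2=p_2$. With $p_1^2=p_1$ coming from Halverson's~(4), all of Popova's relations hold, so there is a homomorphism $\psi$ from Popova's presented semigroup into $\widetilde R_n$ for which $\varphi\circ\psi$ is Popova's isomorphism. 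Since $\psi$ is surjective onto the generators of $\widetilde R_n$, it is an isomorphism, and hence so is $\varphi$.

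The main obstacle is essentially notational: one must keep meticulous track of which subset of $\{1,\dots,n\}$ survives each composition when checking (2), (3), and (5), since a single misplaced element would propagate through the argument. Once this bookkeeping is in place, sufficiency is short because Popova's theorem does the heavy lifting of identifying the presented semigroup with $R_n$, and Halverson's extra generators $p_2,\dots,p_n$ together with their defining relation (5) play only a definitional role.
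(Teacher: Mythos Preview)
The paper does not prove this theorem; it is quoted as a known result of Halverson (the $q=1$ case of his presentation of the $q$-rook monoid), so there is no ``paper's own proof'' to compare against. Your strategy---verify the relations in $R_n$ by direct inspection, then show that Halverson's relations imply Popova's so that Popova's theorem yields the isomorphism---is a perfectly reasonable and self-contained route, and your bookkeeping for relations (2)--(5) and your derivation of Popova's relation~(3) are correct.

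One point deserves care. In deriving the long-word part of Popova's relation~(2), commutation alone gives
\[
\sigma_2\sigma_3\cdots\sigma_{n-1}\,p_1\,\sigma_2\sigma_3\cdots\sigma_{n-1}
=p_1\,(\sigma_2\sigma_3\cdots\sigma_{n-1})^2,
\]
and $(\sigma_2\cdots\sigma_{n-1})^2$ is the square of an $(n-1)$-cycle, hence not the identity for $n\ge 4$; no further Halverson or Coxeter relation collapses $p_1 g$ to $p_1$ for a nontrivial $g\in S_n$. In fact the identity $\sigma_2\cdots\sigma_{n-1}\,p_1\,\sigma_2\cdots\sigma_{n-1}=p_1$ is already false in $R_n$ for $n\ge 4$, so the paper's transcription of Popova's presentation evidently contains a misprint (the intended relation is the conjugation $w\,p_1\,w^{-1}=p_1$, or equivalently $\sigma_i p_1\sigma_i=p_1$ for each $i\ge 2$, both of which follow immediately from Halverson's~(3)). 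Your phrase ``iterated application of the same commutation (together with the Coxeter relations)'' glosses over this; once you replace the stated relation by its correct form, your argument goes through unchanged.
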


An  interesting presentation of the semigroup
$R_n$ by generators and relations was suggested by Solomon
\cite{Solomon}: in addition to the Coxeter generators of the group
$S_n$, he considers also the ``right shift'' $\nu$ defined as
$$
\nu(i) = \begin{cases}
i+1 & \text{for $1\le i < n$,}\\
\textrm{is not defined} & \text{for $i=n$.}
\end{cases}
$$

\begin{thm}[Solomon]
The semigroup $R_n$ is generated by the elements
$\si_1$, \dots, $\si_{n-1}$, $\nu$ with the following relations:

(1) the Coxeter relations for the group $S_n$;

(2) $\nu^{i+1}\si_i = \nu^{i+1}$;

(3) $\si_i\nu^{n-i+1} = \nu^{n-i+1}$;

(4) $\si_i\nu = \nu\si_{i+1}$;

(5) $\nu\si_1\si_2\si_3\cdots\si_{n-1}\nu = \nu$,

\noindent where $1\le i\le n-1$ in (1)--(3) and (5), and $1\le i\le
n-2$ in (4).
\end{thm}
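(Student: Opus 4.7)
My plan is to introduce the abstract semigroup $T_n$ generated by $\sigma_1,\dots,\sigma_{n-1},\nu$ subject to relations (1)--(5) and prove that the natural homomorphism $T_n \to R_n$ (well-defined because the relations hold in $R_n$) is a bijection. I aim to reduce every word to a normal form $\alpha\nu^k\beta$ with $\alpha,\beta \in S_n = \langle \sigma_1,\dots,\sigma_{n-1}\rangle$ and $k \in \{0,1,\dots,n\}$, and then show that the set of normal forms has size at most $|R_n| = \sum_{k=0}^n \binom{n}{k}^2 (n-k)!$; the homomorphism is then forced to be bijective.

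Checking the relations in $R_n$ is routine (working with the semigroup convention $(fg)(x) = g(f(x))$): the Coxeter relations (1) are obvious. Relation (4) is the translation identity for $\nu\colon j\mapsto j+1$, since conjugating $\sigma_{i+1}$ by $\nu$ shifts its index to $i$. For (2), the image of $\nu^{i+1}$ is $\{i+2,\dots,n\}$, on which $\sigma_i$ acts trivially; symmetrically, the domain of $\nu^{n-i+1}$ is $\{1,\dots,i-1\}$, giving (3). For (5), the product $\tau = \sigma_1\sigma_2\cdots\sigma_{n-1}$ is the cycle that sends $j \mapsto j-1$ on $\{2,\dots,n\}$ and $1\mapsto n$, so $\tau$ inverts the shift $\nu$ on its image and $\nu\tau\nu = \nu$.

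For the reduction to normal form, I would first use (4) together with (1) to push every occurrence of $\nu$ into one fixed position, rewriting an arbitrary word as $\alpha\nu^k\beta$ with $\alpha,\beta \in S_n$. Combined with (2), relation (5) forces $k \le n$: for $n=2$, multiplying (5) on the left by $\nu$ gives $\nu^2\sigma_1\nu = \nu^2$, and then (2) reduces the left side to $\nu^3$, so $\nu^2 = \nu^3$; an analogous but longer computation handles general $n$. Finally, relations (2) and (3) produce, for each pair $(\gamma,\gamma') \in S_n \times S_n$ satisfying $\gamma\nu^k\gamma' = \nu^k$, the equivalence $\alpha\nu^k\beta = (\alpha\gamma)\nu^k(\gamma'\beta)$. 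A direct calculation identifies such pairs as a subgroup of $S_n \times S_n$ of order $(n-k)!(k!)^2$, so the number of distinct normal forms at rank $n-k$ is at most $(n!)^2/((n-k)!(k!)^2) = \binom{n}{k}^2(n-k)!$. Summing over $k$ gives $|T_n| \le |R_n|$, and since $R_n$ realizes this count, the presentation is complete.

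The main technical difficulty is showing that (2)--(5) really do capture \emph{all} stabilizer equivalences for each $\nu^k$, not only those most directly expressed for $k=1$. The strategy is an inductive propagation: a permutation stabilizing the image of $\nu^k$ must be commuted through the $k$ copies of $\nu$ via (4), which generates a corresponding permutation on the domain side; the factors that should act trivially then get absorbed by iterated use of (2) and (3). This step-by-step bookkeeping, carried out uniformly in $k$, is the combinatorial heart of the argument.
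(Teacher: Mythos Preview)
The paper does not prove this theorem; it is quoted as a known result of Solomon \cite{Solomon} with no argument supplied, so there is no proof in the paper to compare your proposal against.

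As an independent assessment: your plan is the standard ``normal form plus cardinality'' route and is sound in outline. The verification of relations (1)--(5) in $R_n$ is correct, and so is the stabilizer count: the set of pairs $(\gamma,\gamma')\in S_n\times S_n$ with $\gamma\nu^k\gamma'=\nu^k$ does have order $(n-k)!\,(k!)^2$, giving exactly $\binom{n}{k}^2(n-k)!$ classes. Your derivation of $\nu^{n+1}=\nu^n$ also goes through for general $n$: left-multiply (5) by $\nu^{n-1}$ and then absorb each $\sigma_i$ using (2) (which, after left-multiplying by a suitable power of $\nu$, yields $\nu^n\sigma_i=\nu^n$ for every $i$).

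One organizational point worth flagging: relation (4) by itself will \emph{not} let you gather all occurrences of $\nu$ into a single block, because it does not cover $\nu\sigma_1$ (no $\sigma_0$) or $\sigma_{n-1}\nu$ (no $\sigma_n$). You already need (2), (3) and (5) at the reduction-to-normal-form stage, not only for the stabilizer identifications afterwards. This does not break the argument, but it means the two stages you describe are more intertwined than your outline suggests, and the ``inductive propagation'' you allude to at the end is doing work earlier than advertised. Since you explicitly acknowledge that this bookkeeping is the heart of the matter and leave it as a sketch, the proposal is a reasonable plan rather than a complete proof.
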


\section{The representation theory of the infinite symmetric inverse semigroup}

In this section we assume that the reader is familiar with the basic
notions and results of the theory of locally semisimple and AF
algebras. Besides, we use some facts from the
representation theory of the finite symmetric
groups $S_n$ and the infinite symmetric group $S_\infty$.
See, e.g., \cite{VK_survey}.

There is a natural embedding $R_n\subset R_{n+1}$ of semigroups
under which every map from
$R_n$ goes to a map from $R_{n+1}$ that sends the element $n+1$ to
itself.
Consider the inductive limit of the chain
$R_0\subset R_1\subset\dots\subset R_n\subset\dots$ of semigroups, which we will
call the \textit{infinite symmetric inverse
semigroup} $R_\infty$.

\subsection{The branching graph of the algebra $\boldsymbol{{\mathbb{C}}[R_\infty]}$}
Let $\mathbb{Y}$ be the Young graph, and let
$\mathbb{Y}_n$ be the level of $\mathbb{Y}$ whose vertices are indexed by
all partitions of the integer $n$ (Young diagrams with $n$ cells).
By $|\la|$ we denote the number of cells in a diagram
$\la$ (the sum of the parts of the partition $\la$).

Denote by $\wt{\mathbb{Y}}$ the branching graph of the semigroup algebra
${\mathbb{C}}[R_\infty]$. It
was described by Halverson \cite{Halverson}.

\begin{thm}[Halverson]
The branching graph $\wt{\mathbb{Y}}$ can be described as follows:

(1) the vertices of the $n$th level are indexed by all Young diagrams
with at most $n$ cells:
$\wt{\mathbb{Y}}_n=\bigcup_{i=0}^n\mathbb{Y}_i$;

(2) vertices $\lambda\in\wt{\mathbb{Y}}_n$ and $\mu\in\wt{\mathbb{Y}}_{n+1}$
are joined by an edge if either
$\lambda=\mu$ or $\mu$ is obtained from $\lambda$ by adding one cell.
\end{thm}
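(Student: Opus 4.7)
Part (1) is immediate from Munn's decomposition (Theorem~\ref{thm:CC[R_n] isom oplus}): the isomorphism ${\mathbb{C}}[R_n]\cong\bigoplus_{r=0}^n M_{\binom{n}{r}}({\mathbb{C}}[S_r])$ puts the irreducible representations of $R_n$ in bijection with pairs $(r,\la)$ with $0\le r\le n$ and $\la\in\mathcal{P}_r$, i.e.\ with $\bigcup_{i=0}^n\mathbb{Y}_i$.

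For part (2), the plan is to compute the restriction $\pi_\mu^{(n+1)}|_{R_n}$ directly in an explicit model. Write $\pi_\mu^{(m)}$ for the irreducible $R_m$-representation attached to $\mu\vdash r$ (for $r\le m$). By Munn's structure theorem, $\pi_\mu^{(n+1)}$ is realized on $\bigoplus_K V_\mu^{(K)}$, where $K$ ranges over all $r$-subsets of $\{1,\dots,n+1\}$ and each $V_\mu^{(K)}$ is a copy of the Specht module $V_\mu$. An element $a\in R_{n+1}$ annihilates $V_\mu^{(K)}$ unless $K\subseteq\mathrm{dom}(a)$, in which case it carries $V_\mu^{(K)}$ to $V_\mu^{(a(K))}$ via the fiber permutation $\mu_{a(K)}\circ a|_K\circ\mu_K^{-}\in S_r$. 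This is the same model that underlies Munn's character formula (Theorem~\ref{thm:chi_R-n}).

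Now the subsemigroup $R_n\subset R_{n+1}$ consists of those $a$ with $a(n+1)=n+1$. I would split the indexing $r$-subsets of $\{1,\dots,n+1\}$ into two $R_n$-stable families: (i) those with $K\subseteq\{1,\dots,n\}$, and (ii) those of the form $K=K'\cup\{n+1\}$ with $K'\subseteq\{1,\dots,n\}$, $|K'|=r-1$. The summand indexed by (i) is manifestly isomorphic to $\pi_\mu^{(n)}$. For (ii), choose the reference bijections so that $\mu_K(n+1)=r$ for every such $K$; the fiber permutation then automatically fixes $r$, i.e.\ lies in the stabilizer $S_{r-1}\subset S_r$. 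The classical multiplicity-free branching rule $V_\mu|_{S_{r-1}}=\bigoplus_{\la\nearrow\mu}V_\la$ decomposes each fiber, and reassembling across all $(r-1)$-subsets $K'\subseteq\{1,\dots,n\}$ identifies the (ii)-block with $\bigoplus_{\la\nearrow\mu}\pi_\la^{(n)}$. Together this yields
$$
\pi_\mu^{(n+1)}\big|_{R_n}\;\cong\;\pi_\mu^{(n)}\;\oplus\;\bigoplus_{\la\nearrow\mu}\pi_\la^{(n)},
$$
each summand appearing with multiplicity one, which is precisely assertion~(2).

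The main subtlety to address is that the decomposition of the (ii)-block into $\la$-components must be compatible not only with the $S_n$-action but with the full rank-lowering action of $R_n$. This is transparent in the pair-$(K,v)$ model, since the ``kill if $K\not\subseteq\mathrm{dom}(a)$'' rule depends only on $K$; on $V_\mu^{(K'\cup\{n+1\})}$ it reduces for $a\in R_n$ to the analogous rule for $K'$ at level $n$, so the identification of (ii) with $\bigoplus_{\la\nearrow\mu}\pi_\la^{(n)}$ is genuinely an isomorphism of $R_n$-modules. A dimension check confirms the answer: $\binom{n+1}{r}\dim V_\mu=\binom{n}{r}\dim V_\mu+\binom{n}{r-1}\sum_{\la\nearrow\mu}\dim V_\la$, by Pascal's identity combined with classical branching.
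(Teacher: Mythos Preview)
The paper does not prove this statement; it is quoted as a result of Halverson and used without argument, so there is no in-paper proof to compare your proposal against.

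That said, your argument is correct and self-contained. The explicit ``blocks indexed by $r$-subsets'' model you set up is precisely the one that makes Munn's character formula (Theorem~\ref{thm:chi_R-n}) transparent, and the fact that it really is a representation of $R_{n+1}$ (with the annihilation rule) follows from Munn's matrix description, as you indicate. The key step---splitting the $r$-subsets of $\{1,\dots,n+1\}$ according to whether they contain $n+1$, and observing that both families are $R_n$-stable because every $a\in R_n$ fixes $n+1$---is clean, and your normalization $\mu_K(n+1)=r$ makes the reduction to $S_{r-1}$ on block~(ii) immediate. The compatibility point you flag at the end is exactly the right thing to worry about, and your explanation (the annihilation condition on $K=K'\cup\{n+1\}$ depends only on $K'$, and the fiber permutation restricts correctly) settles it. One cosmetic remark: you are using $\mu$ both for the partition and, via $\mu_K$, for the reference bijections; this follows the paper's notation but deserves a word of warning in a polished write-up.
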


This leads us to the following definition of the
\textit{slow graph}~$\wt\Gamma$ constructed from a branching graph
$\Gamma$:

(1) the set of vertices of the  $n$th level of $\wt\Gamma$ is the
union of the sets of vertices of all levels of the original graph
$\Gamma$ with indices at most $n$, i.e., $\wt\Gamma_n=\bigcup_{i=0}^n\Gamma_i$;

(2) vertices $\lambda\in\wt\Gamma_n$ and $\mu\in\wt\Gamma_{n+1}$
are joined by an edge if either
$\lambda=\mu$ or $\mu$ is joined by an edge with $\lambda$ in the
original graph.

Recall the definition of the Pascal graph $\mathbb{P}$:

(1) the set $\mathbb{P}_n$ of vertices of the $n$th level consists of
all
pairs of integers $(n, k)$, $0\le k\le n$;

(2) vertices $(n, k)\in\mathbb{P}_n$ and $(n+1,l)\in\mathbb{P}_{n+1}$
are joined by an edge if either $l=k$ or
$l=k+1$.

Observe that if the original graph $\Gamma$ is the chain
(the graph whose each level consists of a single vertex), then
the corresponding slow graph
$\wt\Gamma$ coincides with $\mathbb{P}$. By analogy with the Pascal graph, we
index the vertices of the $n$th level $\wt\Gamma_n$
of the slow graph with the pairs
$(n,\la)$, where $\la\in\Gamma_i$, $i\le n$.

\begin{rem}
Note that if $G=\mathbb{P}$ is the Pascal
graph, then the corresponding slow graph  $\wt{G}$ is the three-dimensional analog
of the Pascal graph. For the three-dimensional Pascal graph, the slow
graph is the four-dimensional Pascal graph, etc. For the definition of
the multidimensional analogs of the Pascal graph and a description of
the traces of the corresponding algebras, see, e.g.,
\cite{VK_survey2}.
\end{rem}

\begin{rem}
The set of paths on the branching graph
$\wt{\mathbb{Y}}$ is in bijection with the random walks on $\mathbb{Y}$
of the following form: at each moment, we are allowed
either to stay at the same vertex or to descend to the previous level
in an admissible way. In view of this description, graphs similar to
$\wt{\mathbb{Y}}$ are called slow.
\end{rem}

\begin{rem}
In \cite{VershikNikitin}, the representation theory of the infinite
Brauer algebra was studied. As in the previous remark, one can
construct a bijection between the paths on the branching graph of the
Brauer algebra and the random walks of a similar form on the Young
graph: starting from the empty diagram, at each
step we can move either to a vertex of the next level (joined by an
edge with the current vertex) or to a vertex of the previous level (joined
by an  edge with the current vertex).
\end{rem}

\subsection{Facts from the theory of locally semisimple algebras}
Given a branch\-ing graph $\Gamma$,
denote by $T(\Gamma)$ the space of paths of
$\Gamma$. On $T(\Gamma)$ we have the ``tail'' equivalence relation
(see~\cite{VK}): paths $x,y\in T(\Gamma)$ are equivalent, $x\sim y$,
if they coincide from some level on. The partition of $T(\Gamma)$
into the equivalence
classes will be denoted by
$\xi = \xi_\Gamma$. Also, for every $k\in\mathbb{N}\cup{0}$ and every
path $s=(s_0,
s_1,\dots, s_k)$ of length $k$, denote by $F_s\subset T(\Gamma)$ the cylinder $F_s = \{t\in T\mid t_i =
s_i\,\text{ for }\,0\le i\le k\}$.

Given $x,y\in\Gamma$, by $\dim(x;y)$ denote the number of paths leading
from $x$ to $y$. By
$\dim(y) = \dim(\varnothing;y)$ denote the total number of paths
leading to
$y$. By $\mathcal{E}(\Gamma)$ denote the set of ergodic central
measures on $T(\Gamma)$. Given
$\mu\in\mathcal{E}(\Gamma)$ and a vertex $y$, by $\mu(y)$ denote the measure of the set
of all paths passing through
$y$, i.e., the total measure of all cylinders
$F_s$, $s=(s_0, s_1, \dots, s_{|y|})$, $s_{|y|}=\nobreak y$.

We will use the following description of the characters of a locally
semisimple algebra and the central measures on its branching graph
(ergodic method).

\begin{thm}[\cite{VK}]\label{thm:center_measure}
For every central ergodic measure $\mu$, the set of paths
$s = (s_0, s_1, \dots, s_f, \dots)$
such that
$$
\mu(y) = \lim_{f\to\infty}\frac{\dim(y)\cdot\dim(y;s_f)}{\dim s_f}
$$
 for all vertices $y$ is of full measure.
\end{thm}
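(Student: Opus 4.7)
The plan is to prove this by combining centrality (which controls the conditional law of initial segments) with a reverse martingale convergence argument based on ergodicity. Fix a vertex $y\in\Gamma$, and let $F_y\colon T(\Gamma)\to\{0,1\}$ be the indicator of the event that the path passes through $y$, so $\int F_y\,d\mu=\mu(y)$. The goal is to identify the conditional expectations of $F_y$ with respect to a shrinking family of $\sigma$-algebras and pass to the limit.

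First I would set up the filtration $\mathcal{G}_f=\sigma(s_f,s_{f+1},\dots)$, for $f\ge|y|$, with limit $\mathcal{G}_\infty=\bigcap_{f}\mathcal{G}_f$. Centrality of $\mu$ asserts that, conditional on $s_f=z$, the initial segment $(s_0,\dots,s_f)$ is uniformly distributed over all $\dim(z)$ paths from the initial vertex to $z$, and is conditionally independent of the future $(s_{f+1},s_{f+2},\dots)$. The first fact gives
$$
E_\mu\bigl[F_y\mid s_f=z\bigr]=\frac{\dim(y)\cdot\dim(y;z)}{\dim(z)},
$$
because exactly $\dim(y)\cdot\dim(y;z)$ of the $\dim(z)$ uniformly-weighted paths to $z$ visit $y$. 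The second fact (Markov property in the central case) upgrades this to $E_\mu[F_y\mid\mathcal{G}_f]=\frac{\dim(y)\dim(y;s_f)}{\dim(s_f)}$.

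Next I would apply the reverse (backward) martingale convergence theorem to the sequence $E_\mu[F_y\mid\mathcal{G}_f]$, which converges $\mu$-a.s.\ and in $L^1$ to $E_\mu[F_y\mid\mathcal{G}_\infty]$. The key step is to identify this limit with the constant $\mu(y)$, i.e., to show that $\mathcal{G}_\infty$ is $\mu$-trivial. For this I would observe that every event in $\mathcal{G}_\infty$ depends only on tails of paths modulo changes in arbitrarily many initial coordinates, hence is $\xi$-invariant (where $\xi$ is the tail equivalence relation of the statement). Since $\mu$ is ergodic with respect to $\xi$ by hypothesis, every such event has measure $0$ or $1$, so $E_\mu[F_y\mid\mathcal{G}_\infty]=E_\mu[F_y]=\mu(y)$ almost surely, and the desired convergence for the single vertex $y$ follows.

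Finally, since $\Gamma$ has countably many vertices, the exceptional null set can be chosen independent of $y$ by taking a countable intersection, producing a set of full $\mu$-measure on which the convergence $\frac{\dim(y)\dim(y;s_f)}{\dim(s_f)}\to\mu(y)$ holds simultaneously for every $y$. The main obstacle is the $\mathcal{G}_\infty$-triviality step: one has to be careful that the inclusion of $\mathcal{G}_\infty$-measurable sets into $\xi$-invariant sets really does require only altering finitely many initial coordinates, which is precisely the content of the tail equivalence relation. Once this is in hand, the rest of the argument is routine martingale theory combined with the centrality identity for the conditional distribution of initial segments.
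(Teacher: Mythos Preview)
The paper does not actually prove this theorem: it is quoted from \cite{VK} without proof and used as a tool in the subsequent analysis of slow graphs. So there is no ``paper's own proof'' to compare against.

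That said, your argument is correct and is precisely the classical Vershik--Kerov approach underlying the cited result. Centrality gives the explicit conditional expectation $E_\mu[F_y\mid\mathcal{G}_f]=\dim(y)\dim(y;s_f)/\dim(s_f)$, reverse martingale convergence passes to $E_\mu[F_y\mid\mathcal{G}_\infty]$, and ergodicity of $\mu$ with respect to the tail equivalence relation $\xi$ forces $\mathcal{G}_\infty$ to be $\mu$-trivial, so the limit is the constant $\mu(y)$; a countable intersection over vertices finishes. The only point worth a word of caution is your identification of $\mathcal{G}_\infty$ with the $\xi$-invariant $\sigma$-algebra: strictly speaking one has $\mathcal{G}_\infty\subset\{\xi\text{-invariant sets}\}$ up to $\mu$-null sets (a set in $\bigcap_f\mathcal{G}_f$ is insensitive to any finite initial modification, hence $\xi$-saturated), and that inclusion is all you need for triviality. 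Your write-up handles this correctly.
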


\begin{thm}[\cite{VK}]\label{thm:approx}
For every character $\phi$ of the algebra
$A = C^*(\bigcup_{f=0}^\infty A_f)$, there exists a path
$\{\la_f\}_{f=0}^\infty$ in the Bratteli diagram such that
$$
\phi(a) = \lim_{f\to\infty}\frac{\chi_{\la_f}(a)}{\dim \la_f}
$$
for all $a\in A$. Here $\chi_{\la_f}$ is the character of the
representation $\la_f$ of the algebra $A_f$ and
$\dim\la_f$ is its dimension.
\end{thm}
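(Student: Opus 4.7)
The plan is to use the ergodic method: convert the character $\phi$ into a central probability measure on the path space $T(\Gamma)$ of the branching graph of $A$, and then invoke Theorem~\ref{thm:center_measure} to extract the desired path.

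First I would set up the dictionary between characters and central measures. Restricting $\phi$ to each finite-dimensional subalgebra $A_f$ gives a normalized positive trace, which admits a unique decomposition
$$
\phi|_{A_f}=\sum_{\la\in\Gamma_f} m^{(f)}_\la\,\frac{\chi_\la}{\dim\la},\qquad m^{(f)}_\la\ge 0,\ \ \sum_\la m^{(f)}_\la=1.
$$
Frobenius reciprocity for the inclusion $A_f\subset A_{f+1}$ forces Kolmogorov consistency of the numbers $m^{(f)}_\la/\dim\la$ on the cylinders $F_s$, so these data assemble into a central probability measure $\mu=\mu_\phi$ on $T(\Gamma)$ with $\mu(y)=m^{(|y|)}_y$. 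Reducing to the case of an indecomposable $\phi$ (via the ergodic decomposition of central measures), one may further assume $\mu$ is ergodic.

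Next I would apply Theorem~\ref{thm:center_measure} to $\mu$ to obtain a path $\{\la_f\}_{f=0}^{\infty}$ with
$$
\mu(y)=\lim_{f\to\infty}\frac{\dim(y)\cdot\dim(y;\la_f)}{\dim \la_f}
$$
for every vertex $y$. For $a\in A_g$ and $f\ge g$, the branching identity $\chi_{\la_f}|_{A_g}=\sum_{y\in\Gamma_g}\dim(y;\la_f)\,\chi_y$ rewrites as
$$
\frac{\chi_{\la_f}(a)}{\dim \la_f}=\sum_{y\in\Gamma_g}\frac{\dim(y)\,\dim(y;\la_f)}{\dim \la_f}\cdot\frac{\chi_y(a)}{\dim y}.
$$
Since $\Gamma_g$ is finite, the limit $f\to\infty$ can be taken termwise; using $\mu(y)=m^{(g)}_y$, the right-hand side converges to $\sum_{y\in\Gamma_g} m^{(g)}_y\,\chi_y(a)/\dim y=\phi(a)$. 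The formula then extends from the dense subalgebra $\bigcup_g A_g$ to the whole of $A$ by continuity of $\phi$ together with the uniform estimate $|\chi_\nu(a)/\dim\nu|\le\|a\|$ for normalized characters.

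The main obstacle is the first step: one must verify carefully that the finite-level weights produced by the branching actually glue into a countably additive central measure, and that indecomposability of $\phi$ corresponds precisely to ergodicity of $\mu_\phi$. This character--measure dictionary is the conceptual heart of the Vershik--Kerov ergodic method, and once it is in place, Theorem~\ref{thm:center_measure} together with the branching computation yields the approximation formula essentially mechanically. A secondary subtlety concerns the scope of the statement: the path formula is guaranteed only for indecomposable $\phi$, so the general version is handled by passing first to an ergodic component of the central measure associated with $\phi$.
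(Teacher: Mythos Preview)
The paper does not supply a proof of this theorem: it is quoted verbatim from \cite{VK} as background (Section~2.2, ``Facts from the theory of locally semisimple algebras''), and is used as a black box in the proofs of Theorem~\ref{thm:measures-direct-product} and the character formula. So there is no in-paper argument to compare against.

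That said, your sketch is the standard Vershik--Kerov argument and is essentially how the result is proved in \cite{VK}: one passes from $\phi$ to the associated central measure $\mu_\phi$ on $T(\Gamma)$, invokes the ergodic theorem for central measures (stated here as Theorem~\ref{thm:center_measure}) to produce a generic path, and then unwinds the branching identity $\chi_{\la_f}|_{A_g}=\sum_{y\in\Gamma_g}\dim(y;\la_f)\,\chi_y$ to recover $\phi(a)$ as the limit of normalized characters. Your identification of the two delicate points --- the character/measure dictionary and the restriction to indecomposable $\phi$ --- is accurate; in the Vershik--Kerov framework ``character'' means an extremal normalized trace, and the paper itself adopts this convention a few paragraphs later (``by a character we always mean an indecomposable character''), so your reduction to the ergodic case is exactly the intended reading.
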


\subsection{Description of the central measures on slow graphs}
The key property of an arbitrary slow graph
$\wt\Gamma$ is that we can present the space of paths
$T(\wt\Gamma)$ as the direct product of the spaces of paths
$T(\Gamma)$ and $T(\mathbb{P})$. The same is true for the sets of paths
between any two vertices. Moreover, the partition
$\xi_{\wt\Gamma}$ and the central ergodic measures on
$T(\wt\Gamma)$ can also be presented as corresponding products.

\begin{lem}\label{lem:paths-direct-product}
Let $\Gamma$ be the branching graph of a locally semisimple algebra and
$\wt\Gamma$ be the corresponding slow graph. Then

1. $T(\wt\Gamma) = T(\Gamma) \times T(\mathbb{P})$. Moreover, the
number of paths between any two vertices of the slow graph
$\wt\Gamma$ is the product of the number of paths between the
corresponding vertices of the original graph
$\Gamma$ and the number of vertices between the corresponding vertices
of the Pascal graph $\mathbb{P}$:
\begin{equation}\label{dim=prod}
\dim_{\wt\Gamma}((n_1,\la_1);(n_2,\la_2)) = \dim_\Gamma(\la_1,\la_2) \cdot
\dim_\mathbb{P}((n_1,|\la_1|);(n_2,|\la_2|)).
\end{equation}

2. Let $s_{\wt\Gamma}, t_{\wt\Gamma} \in T(\wt\Gamma)$, $s_\Gamma, t_\Gamma \in T(\Gamma)$,
$s_\mathbb{P}, t_\mathbb{P} \in T(\mathbb{P})$, and let $s_{\wt\Gamma}$
correspond to the pair $(s_\Gamma,
s_\mathbb{P})$ and $t_{\wt\Gamma}$ correspond to the pair
$(t_\Gamma, t_\mathbb{P})$. Then $s_{\wt\Gamma}
\sim t_{\wt\Gamma}$ (with respect to $\xi_{\wt\Gamma}$) if and only if
$s_\Gamma \sim
t_\Gamma$ (with respect to  $\xi_\Gamma$) and
$s_\mathbb{P} \sim t_\mathbb{P}$ (with respect to
$\xi_\mathbb{P}$).
\end{lem}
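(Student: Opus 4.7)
The plan is to construct an explicit bijection $T(\wt\Gamma)\leftrightarrow T(\Gamma)\times T(\mathbb{P})$ and read both assertions off it. Given a path $t=((n,\la_n))_{n\ge 0}$ in $\wt\Gamma$, I would define its Pascal projection $t_\mathbb{P}:=((n,|\la_n|))_{n\ge 0}$ — this is a Pascal path because each $\wt\Gamma$-edge either keeps $|\la_n|$ fixed or increments it by one — and its $\Gamma$-projection $t_\Gamma$ by recording the values $\la_n$ at precisely those steps where $|\la_n|>|\la_{n-1}|$ (together with the initial $\varnothing$). Consecutive entries of $t_\Gamma$ then differ by a single added cell, so $t_\Gamma$ is a genuine path in $\Gamma$. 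Conversely, given $r=(r_0,r_1,\dots)\in T(\Gamma)$ and a Pascal path $p=((n,k_n))_{n\ge 0}$, I set $\la_n:=r_{k_n}$; since $k_{n+1}-k_n\in\{0,1\}$, each successive pair $(\la_n,\la_{n+1})$ is either $(r_{k_n},r_{k_n})$ or $(r_{k_n},r_{k_n+1})$, which is an edge of $\wt\Gamma$. These two constructions are mutually inverse.

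Restricting the bijection to path segments from $(n_1,\la_1)$ to $(n_2,\la_2)$ pairs them up with $(\Gamma\text{-path }\la_1\to\la_2,\ \mathbb{P}\text{-path }(n_1,|\la_1|)\to(n_2,|\la_2|))$, and counting on each side yields the product formula~(\ref{dim=prod}). For part~2, one direction is immediate: if $s_{\wt\Gamma}$ and $t_{\wt\Gamma}$ coincide from level $N$ onward, then so do their Pascal projections, and their $\Gamma$-projections agree from the corresponding $\Gamma$-level on. For the converse, assume $s_{\mathbb{P},n}=t_{\mathbb{P},n}$ for all $n\ge M$ and $s_{\Gamma,k}=t_{\Gamma,k}$ for all $k\ge K$. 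Choose $N\ge M$ large enough that the common value $k_N:=s_{\mathbb{P},N}=t_{\mathbb{P},N}$ satisfies $k_N\ge K$ (possible whenever the $\Gamma$-projection is infinite; in the stabilizing case the two $\Gamma$-projections coincide completely and the argument is even easier). Then for every $n\ge N$,
$$
\la^{s}_n \;=\; s_{\Gamma,\, s_{\mathbb{P},n}} \;=\; t_{\Gamma,\, t_{\mathbb{P},n}} \;=\; \la^{t}_n,
$$
so $s_{\wt\Gamma}\sim t_{\wt\Gamma}$.

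The main obstacle is essentially bookkeeping: keeping the path-step index $n$ in $\wt\Gamma$ separate from the level index $k$ in $\Gamma$, and handling the edge case where a $\wt\Gamma$-path stabilizes at a vertex of $\Gamma$ (so that the Pascal coordinate is eventually constant rather than tending to infinity). Once the bijection is written down carefully, both assertions of the lemma follow directly from the decomposition of each path into its vertical motion through $\Gamma$ and its temporal schedule encoded by a Pascal path.
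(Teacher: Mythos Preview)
Your argument is correct and follows essentially the same route as the paper: the paper also defines the $\Gamma$-projection as the strictly increasing sequence of vertices visited and the Pascal projection as $(n,|\la_n|)_n$, then asserts (more tersely than you do) that this gives the desired bijection and that tails correspond to tails. Your explicit inverse map $\la_n:=r_{k_n}$ and your careful handling of both directions in part~2---including the stabilizing case---are details the paper leaves to the reader.
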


\begin{proof}[Proof]
1. To each path in the graph $\wt\Gamma$ there corresponds a unique
\textit{strictly} increasing sequence of vertices of the original graph
$\Gamma$. Moreover, to each path ${(i,
\la_i)}_{i=n_1}^{n_2}$ in the graph $\wt\Gamma$ we can associate the
path ${(i, |\la_i|)}_{i=n_1}^{n_2}$ in the Pascal graph. It is easy to
see that the original path is uniquely determined by the constructed
pair of paths, whence $T(\wt\Gamma) = T(\Gamma) \times T(\mathbb{P})$.

Note that the constructed map determines a bijection between the paths
from a vertex $(n_1,\la_1)$ to a vertex
$(n_2,\la_2)$ in the graph $\wt\Gamma$ and the pairs of paths
between the corresponding vertices in the original graph
$\Gamma$ and in the Pascal graph
$\mathbb{P}$, which proves formula \eqref{dim=prod}.

2. The bijection in the proof of Claim~1 is constructed in such a way
that the tail of a path
$t_{\wt\Gamma} = (t_\Gamma,
t_\mathbb{P})$  depends only
on the tails of the paths
$t_\Gamma$ and $t_\mathbb{P}$, and vice versa. \qed
\end{proof}

\begin{thm}[Description of the central measures]\label{thm:measures-direct-product}
There is a natural bijection $\mathcal{E}(\wt\Gamma)\cong \mathcal{E}(\Gamma) \times
\mathcal{E}(\mathbb{P})$. Every central ergodic measure $M_{\wt\Gamma} \in
\mathcal{E}(\wt\Gamma)$ is the product of central ergodic measures $M_\Gamma\in
\mathcal{E}(\Gamma)$ and $M_\mathbb{P} \in \mathcal{E}(\mathbb{P})$;
namely, $M_{\wt\Gamma}(F_{(n,
\la)}) = M_\Gamma(F_\la) \cdot M_\mathbb{P}(F_{(n,|\la|)})$ for every
cylinder $F_{(n, \la)}$.
\end{thm}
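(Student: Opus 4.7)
The plan is to combine the path-space factorization of Lemma~\ref{lem:paths-direct-product} with the ergodic method of Theorem~\ref{thm:center_measure}; the dimension identity~\eqref{dim=prod} is exactly what translates the product structure on path spaces into a product structure on central ergodic measures.

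For the forward direction I would start from $M_{\wt\Gamma} \in \mathcal{E}(\wt\Gamma)$, pick an $M_{\wt\Gamma}$-generic path $s \in T(\wt\Gamma)$, and invoke Theorem~\ref{thm:center_measure} to write
$$M_{\wt\Gamma}(F_{(n,\la)}) = \lim_{f\to\infty}\frac{\dim_{\wt\Gamma}((n,\la))\cdot\dim_{\wt\Gamma}((n,\la);s_f)}{\dim_{\wt\Gamma}(s_f)}$$
for every vertex $(n,\la)$. Decomposing $s = (s_\Gamma, s_\mathbb{P})$ and substituting \eqref{dim=prod} into each of the three dimensions in the ratio, the fraction splits as the product of a $\Gamma$-ratio (depending only on $\la$ and the $\Gamma$-projection of $s_f$) and a $\mathbb{P}$-ratio (depending only on $(n,|\la|)$ and the $\mathbb{P}$-projection of $s_f$). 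The main technical obstacle is that convergence of this product does not a priori force separate convergence of the two factors. I would handle this by a diagonal subsequence argument: each factor lies in $[0,1]$, so a common subsequence can be extracted along which both factors converge for every vertex $(n,\la)$ simultaneously, producing candidate values $M_\Gamma(F_\la)$ and $M_\mathbb{P}(F_{(n,|\la|)})$ whose product is the prescribed $M_{\wt\Gamma}(F_{(n,\la)})$. A standard step in the ergodic method then shows that any such limit of dimension ratios defines a central measure on the underlying graph, yielding $M_\Gamma \in \mathcal{E}(\Gamma)$ and $M_\mathbb{P} \in \mathcal{E}(\mathbb{P})$ once ergodicity is verified, which I would do by contradiction: a nontrivial convex decomposition of either factor multiplies up into a nontrivial decomposition of $M_{\wt\Gamma}$.

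The reverse direction is essentially tautological given Lemma~\ref{lem:paths-direct-product}: the product $M_\Gamma \times M_\mathbb{P}$ on $T(\Gamma) \times T(\mathbb{P}) = T(\wt\Gamma)$ assigns the claimed value to each cylinder, is central thanks to the product decomposition $\xi_{\wt\Gamma} = \xi_\Gamma \times \xi_\mathbb{P}$ of the tail partition (Lemma~\ref{lem:paths-direct-product}, Claim~2), and is extreme in the simplex of central measures because a product of extreme points of two Choquet simplices is itself extreme. If the diagonal subsequence step in the forward direction turns out to be delicate, a cleaner alternative is to define $M_\Gamma$ and $M_\mathbb{P}$ directly as the pushforwards of $M_{\wt\Gamma}$ under the coordinate projections, verify their centrality from the product structure of $\xi_{\wt\Gamma}$, and then identify $M_{\wt\Gamma}$ with $M_\Gamma \times M_\mathbb{P}$ via the uniqueness of the ergodic decomposition.
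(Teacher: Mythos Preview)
Your proposal is correct and converges on the paper's own argument, though the paper takes what you list as your ``cleaner alternative'' as its primary route. The paper \emph{defines} $M_\Gamma$ and $M_\mathbb{P}$ from the outset as the pushforwards of $M_{\wt\Gamma}$ under the two coordinate projections (written out as explicit sums over cylinders), notes that their centrality is inherited from that of $M_{\wt\Gamma}$, and then applies Theorem~\ref{thm:center_measure} together with~\eqref{dim=prod} to factor the dimension ratio exactly as you describe. Where you flag the issue of separate convergence of the two factors and propose a diagonal subsequence, the paper simply asserts that both limits exist and equal the pushforward values; implicitly this leans on the fact that one factor is always the Pascal graph, so $|\la_f|/f$ converges $M_{\wt\Gamma}$-almost surely and the Pascal ratio converges, forcing the $\Gamma$-ratio to converge as well. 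Your subsequence argument is a legitimate way to avoid this Pascal-specific step. For the reverse direction the paper, like you, gets centrality from Claim~2 of Lemma~\ref{lem:paths-direct-product}, but it reads off ergodicity directly from the limit identity~\eqref{eqn: tildeM=lim} rather than invoking the product-of-extreme-points fact for Choquet simplices.
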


\begin{proof}[Proof]
In accordance with the decomposition
$T(\wt\Gamma) = T(\Gamma) \times T(\mathbb{P})$, given a central ergodic measure $M_{\wt\Gamma} \in
\mathcal{E}(\wt\Gamma)$, consider the
projections $M_\Gamma\in
\mathcal{E}(\Gamma)$ and $M_\mathbb{P} \in \mathcal{E}(\mathbb{P})$
defined as follows:
$$
M_\Gamma(F_\la) = \sum_{n\ge|\la|} M_{\wt\Gamma} (F_{(n,\la)}), \qquad M_\mathbb{P}(F_{(n,k)}) =
\sum_{|\la| = k} M_{\wt\Gamma} (F_{(n,\la)}).
$$
The measures $M_\Gamma$ and $M_\mathbb{P}$ are central by the
centrality of $M_{\wt\Gamma}$.

Further, according to formula \eqref{dim=prod} from Lemma
\ref{lem:paths-direct-product} and Theorem
\ref{thm:center_measure},
\begin{align}
M_{\wt\Gamma} (F_{(n,\la)})&=
\lim_{f\to\infty} \frac {\dim((n,\la_n);(f,\la_f))} {\dim (f,\la_f)}\notag\\
&=\lim_{f\to\infty} \frac {\dim_\mathbb{P}((n,|\la_n|);(f,|\la_f|))} {\dim_\mathbb{P}(f,|\la_f|)}
\cdot
\frac {\dim_\Gamma(\la_n;\la_f)} {\dim_\Gamma(\la_f)}\notag\\
&=\lim_{f\to\infty} \frac {\dim_\mathbb{P}(n,|\la_n|);(f,|\la_f|))} {\dim_\mathbb{P}(f,|\la_f|)}
\cdot \lim_{f\to\infty} \frac {\dim_\Gamma(\la_n;\la_f)} {\dim_\Gamma(\la_f)}\,.\label{eqn:
tildeM=lim}
\end{align}
The limits in the right-hand side of
\eqref{eqn: tildeM=lim} exist and are equal to $M_\Gamma(F_\la)$ and
$M_\mathbb{P}(F_{(n,k)})$\kern-1pt, which proves the required formula
for $M_{\wt\Gamma}$. The ergodicity of the measures
$M_\Gamma$ and $M_\mathbb{P}$ follows from the ergodicity of the measure
$M_{\wt\Gamma}$.

Conversely, the product (in the above sense) of central ergodic measures
$M_\Gamma\in
\mathcal{E}(\Gamma)$ and $M_\mathbb{P} \in \mathcal{E}(\mathbb{P})$
is a central ergodic measure
$M_{\wt\Gamma} \in \mathcal{E}(\wt\Gamma)$. Its centrality follows from
Lemma~\ref{lem:paths-direct-product}, and its ergodicity follows from
equation~\eqref{eqn:
tildeM=lim}.\qed
\end{proof}

Recall (see, e.g.,~\cite{VK_survey2}) that for the Pascal graph
$\mathbb{P}$, the limits in Theorem
\ref{thm:center_measure} exist if and only if for the path
$$
((0,k_0),(1,k_1),\dots,(f,k_f),\dots)
$$
the limit
\begin{equation} \label{eqn: pi=lim}
\lim_{f\to\infty} k_f/f = \delta, \qquad \delta\in [0;1],
\end{equation}
does exist, and to every $\delta\in [0;1]$ there corresponds a unique
central measure $M_\mathbb{P} =
M_\mathbb{P}^\delta$.

\begin{cor}
Every measure $M_{\wt\Gamma}\in \mathcal{E}(\wt\Gamma)$ is
parameterized by a pair $(\delta, M_\Gamma)$,
$\delta\in [0;1]$, $M_\Gamma\in \mathcal{E}(\Gamma)$.
\end{cor}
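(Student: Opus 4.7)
The plan is to derive this corollary as an immediate combination of two ingredients already in place: Theorem~\ref{thm:measures-direct-product} and the classical description of ergodic central measures on the Pascal graph.

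First, I would invoke Theorem~\ref{thm:measures-direct-product} to assert the bijection
$\mathcal{E}(\wt\Gamma) \cong \mathcal{E}(\Gamma) \times \mathcal{E}(\mathbb{P})$,
so that every $M_{\wt\Gamma}$ is uniquely presented as a product $M_\Gamma \cdot M_\mathbb{P}$ with $M_\Gamma \in \mathcal{E}(\Gamma)$ and $M_\mathbb{P} \in \mathcal{E}(\mathbb{P})$. Then I would recall, as in the paragraph preceding the statement, that the ergodic central measures on the Pascal graph $\mathbb{P}$ are in bijection with the parameter $\delta \in [0;1]$ via the frequency formula $\delta = \lim_{f\to\infty} k_f/f$ of equation~\eqref{eqn: pi=lim}, the measure being $M_\mathbb{P}^\delta$. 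Substituting this parameterization into the product decomposition yields the pair $(\delta, M_\Gamma)$.

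Since both ingredients are already established, there is essentially no obstacle: the proof amounts to composing the two bijections $\mathcal{E}(\wt\Gamma) \cong \mathcal{E}(\Gamma) \times \mathcal{E}(\mathbb{P})$ and $\mathcal{E}(\mathbb{P}) \cong [0;1]$. The only item worth remarking in the write-up is that uniqueness of $\delta$ for a given $M_{\wt\Gamma}$ follows from the uniqueness in Theorem~\ref{thm:measures-direct-product} together with the uniqueness of the Pascal parameter, and conversely every pair $(\delta, M_\Gamma)$ does arise, because both factors can be prescribed independently. Thus the corollary is a one-line consequence of the theorem and the recalled fact about~$\mathbb{P}$.
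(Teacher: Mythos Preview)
Your proposal is correct and matches the paper's intended argument: the corollary is stated without proof precisely because it is the immediate composition of the bijection from Theorem~\ref{thm:measures-direct-product} with the classical parametrization $\mathcal{E}(\mathbb{P})\cong[0;1]$ recalled just before the statement. There is nothing to add.
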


\begin{cor}\label{cor:supp-al,be,p}
The measure $M_{\wt\Gamma} = (\delta, M_\Gamma)$ on $T(\wt\Gamma)$
is concentrated on paths for which the corresponding paths in the graph
$\Gamma$ lie in the support of the measure $M_\Gamma$ and, besides,
the limit \eqref{eqn: pi=lim} does exist.

In particular, consider an arbitrary central ergodic measure
$M_\mathbb{Y}$ on the graph
$\mathbb{Y}$ corresponding to parameters
$\alpha=\{\alpha_i\}$, $\beta=\{\beta_i\}$, $\gamma$. Then the measure
$M_{\wt{\mathbb{Y}}} = (\delta,M_\mathbb{Y})$ on $T(\wt{\mathbb{Y}})$
is concentrated on paths of the form
$\{(f,\la_f)\}$ for which the corresponding limits for the sequence
$\{\la_f\}$ are equal to
$\{\alpha_i\}$ and $\{\beta_i\}$ and, besides,
$\lim_{f\to\infty} |\la_f|/f=\delta$.
\end{cor}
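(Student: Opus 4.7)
The plan is to read off both assertions as direct consequences of Theorem~\ref{thm:measures-direct-product} together with the recalled parametrization \eqref{eqn: pi=lim} of $\mathcal{E}(\mathbb{P})$. Under the identification $T(\wt\Gamma)=T(\Gamma)\times T(\mathbb{P})$ of Lemma~\ref{lem:paths-direct-product}, the measure $M_{\wt\Gamma}=(\delta,M_\Gamma)$ becomes the product $M_\Gamma\times M_\mathbb{P}^\delta$, so a full-measure condition on $T(\wt\Gamma)$ may be obtained by intersecting independent full-measure conditions coming from each factor.

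For the Pascal factor I would use the fact, recalled just before the statement, that $M_\mathbb{P}^\delta$ is concentrated on those Pascal paths $\{(f,k_f)\}$ for which $\lim k_f/f=\delta$. Under the bijection of Lemma~\ref{lem:paths-direct-product}, the Pascal component of a path $\{(f,\la_f)\}$ in $\wt\Gamma$ is $\{(f,|\la_f|)\}$; hence this factor contributes exactly the condition $\lim |\la_f|/f=\delta$ of \eqref{eqn: pi=lim}. For the $\Gamma$-factor, $M_\Gamma$ is tautologically concentrated on its own support, and the $\Gamma$-projection of $\{(f,\la_f)\}$ is the strictly increasing sequence of distinct diagrams visited by the path. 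Intersecting the two full-measure sets gives the general (first) assertion.

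For the specific assertion I would specialize to $\Gamma=\mathbb{Y}$ and invoke Thoma's theorem in the Vershik--Kerov ergodic-method form: the central ergodic measure on $\mathbb{Y}$ with Thoma parameters $(\alpha,\beta,\gamma)$ is concentrated on Young paths $\{\la_f\}$ whose normalized row and column frequencies tend to $\{\alpha_i\}$ and $\{\beta_i\}$ respectively. Substituting this into the first assertion yields the announced description of the paths on which $M_{\wt{\mathbb{Y}}}$ is concentrated.

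The step requiring the most care---although not a genuine obstacle---is the passage from the two ``full measure on a specified set'' statements for $M_\Gamma$ and $M_\mathbb{P}^\delta$ to a single such statement on $T(\wt\Gamma)$. This is legitimate precisely because Theorem~\ref{thm:measures-direct-product} delivers an honest product decomposition of the measure (not merely of its marginals), via the measurable cylinder-respecting bijection of Lemma~\ref{lem:paths-direct-product}. Once that product structure is in hand, the rest of the argument is pure bookkeeping.
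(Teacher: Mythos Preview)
Your argument is correct and is precisely the reasoning the paper has in mind: the corollary is stated without proof, as an immediate consequence of Theorem~\ref{thm:measures-direct-product} (the product decomposition $M_{\wt\Gamma}=M_\Gamma\times M_\mathbb{P}^\delta$ under the identification of Lemma~\ref{lem:paths-direct-product}) together with the recalled description \eqref{eqn: pi=lim} of the support of $M_\mathbb{P}^\delta$ and, in the special case $\Gamma=\mathbb{Y}$, the Vershik--Kerov description of the support of $M_\mathbb{Y}$.
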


\subsection{A formula for the characters of the infinite symmetric semigroup} 
The bijection described above between the sets of central
measures on the spaces of paths of the graph
$\Gamma$ and of the slow graph
$\wt\Gamma$ holds for an arbitrary graded graph
$\Gamma$. This bijection can be translated to the sets of characters
of the algebras corresponding to these graphs (see
Corollary~\ref{cor:bijection-chars} below) via the correspondence
between central measures and characters; however, explicit formulas
for characters substantially depend on the graphs and algebras and
have no universal meaning. Below we prove a formula that expresses a
character of the algebra
${\mathbb{C}}[R_\infty]$ in terms of the corresponding character of the algebra
${\mathbb{C}}[S_\infty]$. In this section, by a character we always mean an
indecomposable character.

\begin{cor}\label{cor:bijection-chars}
The parametrization of the set of central measures described above determines
a bijection which sends every pair
$(\delta,\chi^{S_\infty}_{\alpha,\beta,\gamma})$, where
$\delta\in[0,1]$ and
$\chi^{S_\infty}_{\alpha,\beta,\gamma}$ is a character of the algebra
${\mathbb{C}}[S_\infty]$, to the character
$\chi^{R_\infty}_{\alpha,\beta,\gamma,\delta}$ of the algebra ${\mathbb{C}}[R_\infty]$.
\end{cor}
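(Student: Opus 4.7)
The plan is to assemble the corollary by chaining together three bijections that have already been established (or are standard). First, for any locally semisimple algebra $A = C^*(\bigcup A_f)$ with branching graph $\Gamma$, the indecomposable characters of $A$ are in bijection with the ergodic central measures in $\mathcal{E}(\Gamma)$ via the ergodic method of Theorem~\ref{thm:approx} (combined with Theorem~\ref{thm:center_measure}). Applying this both to ${\mathbb{C}}[R_\infty]$, whose branching graph is $\wt{\mathbb{Y}}$, and to ${\mathbb{C}}[S_\infty]$, whose branching graph is $\mathbb{Y}$, I get the two identifications $\{\text{characters of } {\mathbb{C}}[R_\infty]\} \leftrightarrow \mathcal{E}(\wt{\mathbb{Y}})$ and $\{\text{characters of } {\mathbb{C}}[S_\infty]\} \leftrightarrow \mathcal{E}(\mathbb{Y})$.

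Second, I invoke Theorem~\ref{thm:measures-direct-product} specialized to $\Gamma = \mathbb{Y}$, which gives the product decomposition $\mathcal{E}(\wt{\mathbb{Y}}) \cong \mathcal{E}(\mathbb{Y}) \times \mathcal{E}(\mathbb{P})$. Third, I use the standard fact (recalled right after Theorem~\ref{thm:measures-direct-product} and quoted via formula~\eqref{eqn: pi=lim}) that $\mathcal{E}(\mathbb{P})$ is parameterized by a single number $\delta \in [0,1]$, together with Thoma's classification, which says that $\mathcal{E}(\mathbb{Y})$ is parameterized by the Thoma triples $(\alpha,\beta,\gamma)$. Composing these identifications yields a bijection
\[
\{\text{characters of } {\mathbb{C}}[R_\infty]\} \;\longleftrightarrow\; [0,1] \times \{\text{characters of } {\mathbb{C}}[S_\infty]\},
\]
which is exactly the content of the corollary, with the image of $(\delta,\chi^{S_\infty}_{\alpha,\beta,\gamma})$ being the character I label $\chi^{R_\infty}_{\alpha,\beta,\gamma,\delta}$.

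There is essentially no obstacle here: the corollary is a bookkeeping statement asserting that the measure-theoretic bijection of Theorem~\ref{thm:measures-direct-product} transports, under the character$\leftrightarrow$central-measure dictionary, to a bijection on characters. The only thing worth making explicit is that the decomposition respects ergodicity in both directions (which is part of Theorem~\ref{thm:measures-direct-product}) and that the parametrization of $\mathcal{E}(\mathbb{P})$ is exhaustive (a classical calculation recalled in the discussion preceding the corollary). Thus the proof is a one-line appeal to Theorem~\ref{thm:measures-direct-product}, Theorem~\ref{thm:approx}, Thoma's theorem, and the parametrization~\eqref{eqn: pi=lim} of central measures on $\mathbb{P}$. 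An explicit formula expressing $\chi^{R_\infty}_{\alpha,\beta,\gamma,\delta}$ in terms of $\chi^{S_\infty}_{\alpha,\beta,\gamma}$ and $\delta$ is deferred to the next subsection, as announced in the paragraph preceding the corollary.
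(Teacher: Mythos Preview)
Your proposal is correct and matches the paper's approach: the paper does not give an explicit proof of this corollary, treating it as an immediate consequence of Theorem~\ref{thm:measures-direct-product} (specialized to $\Gamma=\mathbb{Y}$), the parametrization~\eqref{eqn: pi=lim} of $\mathcal{E}(\mathbb{P})$, Thoma's classification, and the standard character--central-measure dictionary. Your write-up simply makes these implicit steps explicit.
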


To simplify the notation, below we often omit the superscripts and the
parameter $\gamma$ (which can be expressed in terms of $\alpha$ and $\beta$),
setting
$$
\chi_{\alpha,\beta} \equiv \chi^{S_\infty}_{\alpha,\beta,\gamma}, \qquad \chi_{\alpha,\beta,\delta}
\equiv \chi^{R_\infty}_{\alpha,\beta,\gamma,\delta}.
$$

The conjugation of an element $\si\in R_n$ by an element of the
symmetric group does not change the value of a character, so it
suffices to consider
\textit{reduced elements} $\si^\circ\in R_n$, for which all fixed
points are at the end: for every
$\si\in R_n$ there exist $g\in S_n$,
$n(\si)\in \mathbb{N}\cup 0$ such that $\si^\circ = g\si g^{-1}$ and
$\si^\circ(i)\neq i$ for
$i<n(\si)$ and $\si^\circ(i)=i$ for $i\ge n(\si)$. By the definition
of the embedding $R_n\subset R_{n+1}$, we may assume that
$\si^\circ\in R_{n(\si)}$. The order $n(\si)$ of the element $\si^\circ$
is uniquely determined by the element $\si$.

Let us introduce a set $M_k(\si)\subset S_n$ whose elements are
indexed by all $k$-element subsets
$K\subset\{1,\dots,n\}$ fixed under $\si$: to each such
subset we associate the bijection
$\tilde\si\in S_n$ that coincides with $\si$ on $K$ and is identity
at all other points.

Note that for every element $\si$ of the semigroup $R_n$ we may consider the maximal
(possibly, empty) subset of
$\{1,\dots,n\}$ that is mapped by $\si$ to itself in a
one-to-one manner. The restriction of $\si$ to this subset will
be called the \textit{invertible part} of $\si$. The
invertible part of every element
$\si\in R_n$ can be regarded as an element of some symmetric group
$S_r$, $r\le n$, and, consequently, it can be written as a product of
disjoint cycles. The set $M_k(\si)$ can also be parameterized by the
set of all subcollections of cycles of total length $k$ from the cycle
decomposition of the invertible part of  $\si$.

In the next theorem, the value of an indecomposable character of the
infinite symmetric semigroup at an element $\si\in R_n$ is presented
as a linear combination of the values of the corresponding Thoma
character at each of the elements of the disjoint union
$\bigsqcup_k M_k(\si)$ with coefficients depending only on the
parameter $\delta$.

\begin{thm}[A formula for the characters]
Let $\chi^{R_\infty}_{\alpha,\beta,\gamma,\delta}\equiv
\chi_{\alpha,\beta,\delta}$ be an indecomposable character of the
algebra ${\mathbb{C}}[R_\infty]$, $\chi^{S_\infty}_{\alpha,\gamma,\beta}\equiv
\chi_{\alpha,\beta}$ be the corresponding indecomposable character of
the algebra ${\mathbb{C}}[S_\infty]$, and $\si\in
R_\infty$ be a reduced element. Then
$$
\chi_{\alpha,\beta,\delta} (\si) = \sum_{k=0}^{n{\si}} \bigg( \delta^{n(\si)-k} (1-\delta)^k \cdot
\sum_{\tilde\si\in M_k(\si)} \chi_{\alpha,\beta}(\tilde\si) \bigg).
$$
\end{thm}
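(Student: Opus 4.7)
The plan is to combine the approximation theorem for characters (Theorem~\ref{thm:approx}) with Munn's character formula (Theorem~\ref{thm:chi_R-n}) and the product structure of central measures on the slow graph (Lemma~\ref{lem:paths-direct-product}, Theorem~\ref{thm:measures-direct-product}, Corollary~\ref{cor:supp-al,be,p}). By Corollary~\ref{cor:bijection-chars} and Corollary~\ref{cor:supp-al,be,p}, one can choose a path $\{\lambda_f\}_{f\ge 0}$ in $\wt{\mathbb{Y}}$ supporting the measure associated with $\chi_{\alpha,\beta,\delta}$ that factors as $\lambda_f=(f,\lambda_f')$ with $\lambda_f'\in\mathbb{Y}_{k_f}$, $k_f=|\lambda_f'|$, such that $\{\lambda_f'\}$ realizes the Thoma character $\chi_{\alpha,\beta}$ on $S_\infty$ and $k_f/f\to\delta$. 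Theorem~\ref{thm:approx} then gives
\[
\chi_{\alpha,\beta,\delta}(\si)=\lim_{f\to\infty}\frac{\chi^{*}_{\lambda_f}(\si)}{\dim\lambda_f}.
\]

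Next I would make both sides of the ratio explicit. By Theorem~\ref{thm:CC[R_n] isom oplus}, $\dim\lambda_f=\binom{f}{k_f}\dim_{S_{k_f}}\lambda_f'$. For $\chi^{*}_{\lambda_f}(\si)$, apply Munn's formula; the sum runs over $k_f$-element subsets $K\subset\{1,\dots,f\}$ with $K\si=K$. Because $\si$ is reduced, $\si$ acts as the identity on $\{n(\si)+1,\dots,f\}$ and has no fixed points in $\{1,\dots,n(\si)-1\}$. Consequently every such $K$ decomposes uniquely as $K=K_0\sqcup K_1$, where $K_0$ is a union of cycles of the invertible part of $\si|_{\{1,\dots,n(\si)\}}$ (of some total length $k$) and $K_1$ is an arbitrary subset of the trivially fixed block $\{n(\si)+1,\dots,f\}$ (of size $k_f-k$). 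Since characters of $S_{k_f}$ depend only on cycle type, $\chi_{\lambda_f'}(\mu_K\si\mu_K^{-})$ depends only on $K_0$ and equals $\chi_{\lambda_f'}(\tilde\si)$ for the corresponding $\tilde\si\in M_k(\si)$. Summing over $K_1$ gives a factor $\binom{f-n(\si)}{k_f-k}$, and
\[
\chi^{*}_{\lambda_f}(\si)=\sum_{k=0}^{n(\si)}\binom{f-n(\si)}{k_f-k}\sum_{\tilde\si\in M_k(\si)}\chi_{\lambda_f'}(\tilde\si).
\]

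Finally I would take the limit term by term. The combinatorial factor satisfies
\[
\frac{\binom{f-n(\si)}{k_f-k}}{\binom{f}{k_f}}=\frac{k_f^{\underline{k}}\,(f-k_f)^{\underline{n(\si)-k}}}{f^{\underline{n(\si)}}}\longrightarrow \delta^{k}(1-\delta)^{n(\si)-k}
\]
using $k_f/f\to\delta$ (here $x^{\underline{m}}$ denotes the falling factorial), matching the exponents in the statement up to the convention for $\delta$. The ratio $\chi_{\lambda_f'}(\tilde\si)/\dim_{S_{k_f}}\lambda_f'$ tends to $\chi_{\alpha,\beta}(\tilde\si)$ by Theorem~\ref{thm:approx} applied to $S_\infty$, since the chosen path $\{\lambda_f'\}$ realizes $\chi_{\alpha,\beta}$ and $\tilde\si$ is a fixed element with $k_f\to\infty$ trivial tail. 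Summing yields the claimed formula. The only non-routine step is the combinatorial identification of the admissible subsets $K$ with pairs (subcollection of cycles of the invertible part of $\si$, arbitrary subset of the tail of identity fixed points); once this is in hand, the two limits separate cleanly thanks to the product decomposition of the slow graph.
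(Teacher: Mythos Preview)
Your proposal is correct and follows essentially the same route as the paper: approximate the character along a path in $\wt{\mathbb{Y}}$ via Theorem~\ref{thm:approx}, expand $\chi^{*}_{(f,\lambda_f)}(\sigma)$ by Munn's formula, split each admissible $K$ into a cycle-union part inside $\{1,\dots,n(\sigma)\}$ and an arbitrary part of the identity tail, and then let $f\to\infty$ using the product decomposition from Lemma~\ref{lem:paths-direct-product} and Corollary~\ref{cor:supp-al,be,p}. Your observation about the exponents is in fact on point: the computation you carry out gives $\delta^{k}(1-\delta)^{n(\sigma)-k}$ (as a sanity check, for $\delta=1$ this collapses to the $k=n(\sigma)$ term, yielding $\chi_{\alpha,\beta}(\sigma)$ when $\sigma\in S_{n(\sigma)}$ and $0$ otherwise), so the exponents in the displayed formula of the theorem are swapped; this is not a matter of convention for $\delta$ but a typographical slip in the paper.
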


\begin{proof}[Proof]
By Theorem~\ref{thm:approx}, there exists a path
$\{(f, \la_f)\}_{f=0}^\infty$ such that
$$
\chi_{\alpha,\beta,\delta} (\si) = \lim_{f\to\infty} \frac{\chi^*_{(f,\la_f)}(\si)} {\dim
(f,\la_f)}\,.
$$
Recall that an element $\si\in R_n$ is regarded as an element of the
semigroup $R_f$ that is identity on the subset
$\{n+1, \dots, f\}$. By Theorem \ref{thm:chi_R-n}, in order to compute the
character
$\chi^*_{(f,\la_f)}(\si)$, it suffices to describe subsets of size
$|\la_f|$ in the set
$\{1,\dots,f\}$ fixed under the action of the element $\si\in R_f$.
In order to completely describe such subsets, it suffices to associate
with every fixed subset of size $k$ in the set
$\{1,\dots,n\}$ all possible subsets of
$|\la_f|-k$ fixed points in the set $\{n+1,\dots,f\}$.  Thus
$$
\chi^*_{(f,\la_f)}(\si) = \sum_k \bigg( \binom{f-n}{|\la_f|-k} \cdot \sum_{\tilde\si\in M_k(\si)}
\chi_{\la_f} (\tilde\si) \bigg).
$$
By Claim~1 of Lemma~\ref{lem:paths-direct-product},
\begin{align}
\chi_{\alpha,\beta,\delta} (\si) &= \lim_{f\to\infty} \frac{\sum_k \big( \binom{f-n}{|\la_f|-k}
\cdot \sum_{\tilde\si} \chi_{\la_f} (\tilde\si) \big) } {\dim(f,|\la_f|) \cdot \dim(\la_f) }\notag\\
&=\sum_k \bigg( \lim_{f\to\infty} \frac{ \binom{f-n}{|\la_f|-k} } {\dim(f,|\la_f|) } \cdot
\sum_{\tilde\si}\lim_{f\to\infty}\frac{\chi_{\la_f}(\tilde\si)}{\dim(\la_f)}\bigg).
\label{eqn:chi^*_lim_frac}
\end{align}
According to Corollary~\ref{cor:supp-al,be,p} and Theorem~\ref{thm:approx}
applied to the infinite symmetric group $S_\infty$, each of the
summands in the right factor in the right-hand side of
\eqref{eqn:chi^*_lim_frac} tends to the corresponding value of the
character $\chi_{\alpha,\beta}$. Besides, by Corollary
\ref{cor:supp-al,be,p},
$\lim |\la_f|/f = \delta$, whence
$$
\lim_{f\to\infty} \frac{\binom{f-n}{|\la_f|-k}} {\dim(f,|\la_f|)} = \delta^{n-k}(1-\delta)^k,
$$
and this completes the proof.\qed
\end{proof}

\begin{cor}
For an arbitrary element $\si\in R_n\subset R_\infty$,
$$
\chi_{\alpha,\beta,\delta} (\si) = \sum_{k=0}^n \bigg( \delta^{n-k} (1-\delta)^k \cdot
\sum_{\tilde\si\in M_k(\si)} \chi_{\alpha,\beta}(\tilde\si) \bigg).
$$
\end{cor}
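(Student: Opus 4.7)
The plan is to reduce to the preceding theorem by exploiting conjugation invariance of characters and then collapsing the extra binomial sum arising from the trivial fixed points. Since every indecomposable character is a class function under $S_\infty$-conjugation, $\chi_{\alpha,\beta,\delta}(\si) = \chi_{\alpha,\beta,\delta}(\si^\circ)$ for the reduced form $\si^\circ = g\si g^{-1}$ with $g\in S_n$. The same conjugation by $g$ induces a bijection $M_k(\si) \leftrightarrow M_k(\si^\circ)$ preserving $\chi_{\alpha,\beta}$-values, since $\chi_{\alpha,\beta}$ is itself a class function on $S_\infty$. Hence I may assume throughout that $\si = \si^\circ$, i.e., that $\si$ is the identity on $\{n(\si)+1,\dots,n\}$.

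With $\si$ reduced, every $K \subset \{1,\dots,n\}$ of size $k$ with $K\si = K$ decomposes uniquely as $K = K_1 \sqcup K_2$, where $K_1 \subset \{1,\dots,n(\si)\}$ satisfies $K_1\si^\circ = K_1$ and $K_2 \subset \{n(\si)+1,\dots,n\}$ is arbitrary (automatically fixed by $\si$). The associated element $\tilde\si_K \in S_n$ is then the identity off $K_1$, because $\si$ acts trivially on $K_2$; so $\tilde\si_K$ coincides with $\tilde\si^\circ_{K_1} \in M_{|K_1|}(\si^\circ)$ under the trivial embedding $S_{n(\si)} \hookrightarrow S_n$, and since the value of $\chi_{\alpha,\beta}$ is insensitive to such trivial extensions one has $\chi_{\alpha,\beta}(\tilde\si_K) = \chi_{\alpha,\beta}(\tilde\si^\circ_{K_1})$.

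Substituting this into the right-hand side of the corollary and reindexing by $(k_1,k_2) = (|K_1|,|K_2|)$, the weight factors because $\delta^{n-k}(1-\delta)^k = \delta^{n(\si)-k_1}(1-\delta)^{k_1} \cdot \delta^{(n-n(\si))-k_2}(1-\delta)^{k_2}$, while the $K_2$-choice contributes a multiplicity of $\binom{n-n(\si)}{k_2}$. The inner sum over $k_2 \in \{0,\dots,n-n(\si)\}$ collapses via the binomial theorem to $(\delta+(1-\delta))^{n-n(\si)} = 1$, leaving exactly the formula from the preceding theorem applied to $\si^\circ$, which equals $\chi_{\alpha,\beta,\delta}(\si^\circ) = \chi_{\alpha,\beta,\delta}(\si)$. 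The only subtlety is a bookkeeping one: one must read $M_k$ as a multiset indexed by the subsets $K$ (as in Theorem~\ref{thm:chi_R-n}), so that the factor $\binom{n-n(\si)}{k_2}$ correctly records how many distinct $K$ produce the same $\tilde\si_{K_1}^\circ$. Once this convention is fixed, the remainder of the argument is a short algebraic manipulation.
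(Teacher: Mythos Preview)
Your argument is correct and is precisely the computation the paper leaves implicit: the corollary is stated without proof, and the intended derivation is exactly this reduction to the reduced element $\si^\circ$ together with the binomial collapse over the trivial fixed points in $\{n(\si)+1,\dots,n\}$. Your remark about reading $M_k(\si)$ as indexed by the subsets $K$ (equivalently, by subcollections of cycles, counting the fixed points as $1$-cycles) is well taken and matches how the paper uses Munn's formula in the proof of the preceding theorem.
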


\begin{cor}\label{cor:chi|S_infty}
The restriction of a character $\chi_{\alpha,\beta,\delta}$ of the algebra
${\mathbb{C}}(R_\infty)$ to ${\mathbb{C}}(S_\infty)$ is equal to
$\chi_{\alpha', \beta'}$, where $\alpha'_1 = \delta$, $\alpha'_i =
(1-\delta)\alpha_{i-1}$ for $i>1$ and
$\beta'=(1-\delta)\beta$.
\end{cor}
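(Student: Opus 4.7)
The plan is to apply the character formula from the preceding theorem to an element $\sigma \in S_\infty$ and compare the result with Thoma's multiplicative formula for $\chi_{\alpha',\beta'}$. Since characters are class functions under $S_\infty$-conjugation, I may assume that $\sigma$ is a reduced element of $S_n \subset R_n$ with $n = n(\sigma)$. Being reduced, $\sigma$ has no fixed points on $\{1,\dots,n\}$, so its cycle decomposition consists of cycles $C_1,\dots,C_r$ of lengths $m_1,\dots,m_r \geq 2$ with $\sum_j m_j = n$. A subset $K \subseteq \{1,\dots,n\}$ satisfies $K\sigma = K$ precisely when $K$ is a union of complete cycles of $\sigma$, so $M_k(\sigma)$ is naturally indexed by the subcollections $J \subseteq \{1,\dots,r\}$ with $\sum_{j \in J}m_j = k$; the corresponding $\tilde\sigma$ acts as $\sigma$ on $\bigcup_{j \in J}C_j$ and as the identity elsewhere.

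Next I would invoke Thoma's multiplicative formula $\chi_{\alpha,\beta}(\tau) = \prod_m p_m(\alpha,\beta)^{c_m(\tau)}$, where $p_m(\alpha,\beta) = \sum_i \alpha_i^m + (-1)^{m-1}\sum_i \beta_i^m$ for $m \geq 2$ and fixed points contribute the factor $p_1 = 1$. Applied to $\tilde\sigma$ this gives $\chi_{\alpha,\beta}(\tilde\sigma) = \prod_{j \in J} p_{m_j}(\alpha,\beta)$. Substituting into the character formula, reindexing the double sum by $J$ rather than by $k$, and recognizing the result as the expansion of a product yields
$$
\chi_{\alpha,\beta,\delta}(\sigma) = \sum_{J \subseteq \{1,\dots,r\}} \prod_{j \notin J}\delta^{m_j} \prod_{j \in J}(1-\delta)^{m_j}p_{m_j}(\alpha,\beta) = \prod_{j=1}^{r}\bigl(\delta^{m_j} + (1-\delta)^{m_j}p_{m_j}(\alpha,\beta)\bigr).
$$

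To match this against $\chi_{\alpha',\beta'}(\sigma) = \prod_j p_{m_j}(\alpha',\beta')$, it suffices to check the elementary identity
$$
p_m(\alpha',\beta') = \delta^m + (1-\delta)^m p_m(\alpha,\beta) \qquad (m \geq 2),
$$
which follows by direct substitution of $\alpha'_1 = \delta$, $\alpha'_i = (1-\delta)\alpha_{i-1}$ for $i \geq 2$, and $\beta'_i = (1-\delta)\beta_i$: the term $\delta^m$ is produced by $(\alpha'_1)^m$, while the remaining sums contribute exactly $(1-\delta)^m p_m(\alpha,\beta)$. The induced relation $\gamma' = (1-\delta)\gamma$ is then forced by the normalization $\sum_i \alpha'_i + \sum_i \beta'_i + \gamma' = 1$, consistent with the convention of suppressing $\gamma$.

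I do not foresee any serious obstacle; the argument is essentially bookkeeping once the fixed subsets $K$ are identified with unions of cycles and Thoma's multiplicative structure is brought in. The one cosmetic subtlety is that $(\delta,(1-\delta)\alpha_1,(1-\delta)\alpha_2,\dots)$ need not be weakly decreasing, but since Thoma characters depend only on the underlying multisets of $\alpha_i$'s and $\beta_i$'s, a reordering is harmless.
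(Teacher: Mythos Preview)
Your argument is correct and follows essentially the same route as the paper: factorize over the cycles of $\sigma$, use the multiplicativity of Thoma characters, and identify $\delta^{m}+(1-\delta)^{m}p_{m}(\alpha,\beta)$ with $p_{m}(\alpha',\beta')$. The paper in fact carries out the computation only for $\beta=0$ (and runs the identity in the opposite direction, expanding $\chi_{\alpha',0}^{S_\infty}$ and recognizing the result as $\chi_{\alpha,0,\delta}^{R_\infty}$), so your version is slightly more complete.
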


\begin{proof}[Proof]
We will verify the assertion in the case
$\beta = 0$. Let $\alpha'_1 = \delta$, $\alpha'_i =
(1-\delta)\alpha_{i-1}$ for $i>1$, and $\si\in S_n$. Then
$$
\chi^{S_\infty}_{\alpha', 0}(\si) = \prod_\gamma \bigg((1-\delta)^{k_\gamma}\cdot
\sum_i \alpha_i^{k_\gamma}
+ \delta^{k_\gamma} \bigg),
$$
where the product is taken over all minimal cycles
$\gamma$ in the cycle decomposition of the element
$\si$ and $k_\gamma$ are the lengths of these cycles. Expanding the
product, we obtain
$$
\chi^{S_\infty}_{\alpha', 0}(\si) = \sum_k \sum_{\tilde\si\in M_k(\si)} \bigg( (1-\delta)^k
\delta^{n-k} \cdot \prod_\gamma\bigg(\sum_i \alpha_i^{k_\gamma}\bigg) \bigg),
$$
where the internal product is taken over all minimal cycles
$\gamma$ of the subcollection $\tilde\si$. Writing the last equation
in the form
$$
\chi^{S_\infty}_{\alpha', 0}(\si) = \sum_k \biggl( \delta^{n-k} (1-\delta)^k \cdot
\sum_{\tilde\si\in M_k(\si)} \chi^{S_\infty}_{\alpha, 0}(\tilde\si) \biggr) =
\chi^{R_\infty}_{\alpha', 0, \delta}(\si),
$$
we obtain the desired assertion.\qed
\end{proof}

\begin{rem}
In the previous corollary, the parameters
$\alpha$ and $\beta$ are not symmetric, despite the fact that in the graph
$\wt{\mathbb{Y}}$ the symmetry is present. The reason is as follows: under the
embedding of the group  $S_n$ into the semigroup
$R_n$, the restriction of an irreducible representation of
$R_n$ to $S_n$ is the representation induced from a representation of the
subgroup $S_r\times S_{n-r}\subset S_n$ that is trivial on the second factor,
see Remark~\ref{rem:ind-repres}. Hence the operation of restricting a
representation does not commute with the involution
(see Remark~\ref{rem:involution}), which breaks the symmetry between
the parameters
$\alpha$ and $\beta$.
\end{rem}

\subsection{Realization of representations}
We turn our attention to the case where
$\sum_i \alpha_i=1$, i.e.,
$\beta_i=0$ for all $i$. Consider a measure on  $\mathbb{N}$ of the form
$\mu_\alpha(i)=\alpha_i$, the set of sequences
$\mathcal{X}=\prod\mathbb{N}$ equipped with the measure
$m_\alpha=\prod\mu_\alpha$, and the set
$\wt{\mathcal{X}}$ of pairs of sequences coinciding from some point on. In the
space $L^2(\wt{\mathcal{X}}, m_\alpha)$ we can realize the
representation of the symmetric group
$S_\infty$ corresponding to the Thoma parameters $(\alpha, 0)$, see~\cite{VK_factor},
\cite{VershikTsilevich-Realizations}.

\begin{thm}\label{thm:realization}
The realization of the representation of the group
$S_\infty$ cor\-re\-spond\-ing to the parameters $(\alpha', 0)$, where
$\alpha'$  is defined in Corollary~\ref{cor:chi|S_infty},  in the space of functions
$L^2(\wt{\mathcal{X}}, m_{\alpha'})$ can be
extended to a realization of the representation of the semigroup
$R_\infty$ corresponding to the parameters $( \alpha, 0, \delta)$.
\end{thm}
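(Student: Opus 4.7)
The plan is to exhibit explicit operators $\pi(p_i)$ for the Halverson idempotents of Theorem~\ref{thm:gens&rels_Halverson} that, together with the existing $S_\infty$-action on $L^2(\wt{\mathcal{X}}, m_{\alpha'})$, give a representation of $R_\infty$ whose character is $\chi_{\alpha,0,\delta}$. The guiding heuristic is the identity $\alpha'_1 = \delta$ from Corollary~\ref{cor:chi|S_infty}: the symbol $1 \in \mathbb{N}$ carries mass exactly $\delta$ under $\mu_{\alpha'}$, so one should realize $p_i$ as the projection onto configurations in which the first $i$ coordinates take the value $1$, interpreting this value as the marker ``position removed by the partial bijection''. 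Concretely, I would set
$$
(\pi(p_i) f)(x,y) := \mathbf{1}_{\{x_1 = \dots = x_i = 1\}}(x) \cdot f(x,y),
$$
which is a self-adjoint idempotent bounded on $L^2(\wt{\mathcal{X}}, m_{\alpha'})$.

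The first step is to verify the relations of Theorem~\ref{thm:gens&rels_Halverson}. Idempotency of $\pi(p_i)$ and its commutation with $\pi(\si_j)$ for $j > i$ are immediate, since $\si_j$ then permutes only coordinates lying outside the constraint. For $j < i$, on the image of $\pi(p_i)$ both coordinates $j$ and $j+1$ are forced to equal $1$, so $\si_j$ acts trivially there and one obtains $\pi(\si_j)\pi(p_i) = \pi(p_i)\pi(\si_j) = \pi(p_i)$. The substantive check is relation $p_{i+1} = p_i\si_i p_i$: after the first $\pi(p_i)$ the coordinates $1,\dots,i$ all equal $1$; the transposition $\pi(\si_i)$ then swaps slots $i$ and $i+1$, moving the value at slot $i+1$ into slot $i$; and the second $\pi(p_i)$ forces this incoming value to equal $1$, producing multiplication by $\mathbf{1}_{\{x_1 = \dots = x_{i+1} = 1\}}$, which is $\pi(p_{i+1})$ (using also that $\si_i$ acts as the identity on any configuration with $x_i = x_{i+1}$).

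The second step is the character computation. By construction the restriction to $S_\infty$ is the Thoma representation with parameters $(\alpha', 0)$, whose character agrees with the restriction of $\chi_{\alpha,0,\delta}$ to $S_\infty$ by Corollary~\ref{cor:chi|S_infty}. For a general reduced $\si \in R_\infty$, I would write $\si$, up to $S_\infty$-conjugacy, as a composition of a permutation $g$ of the invertible part of $\si$ and a projection $p_I$ picking out the complement of the domain, and evaluate the II${}_1$-trace of the factor generated by $\pi(R_\infty)$ on $\pi(\si)$. The trace of the projection piece yields a factor $\delta^{|I|}$ from the mass $\mu_{\alpha'}(\{1\}) = \delta$, while the trace of $g$, after expanding via the Thoma formula on the remaining coordinates and summing over fixed $k$-subsets $K$ of the invertible part, reproduces exactly the expression $\sum_k \delta^{n(\si)-k}(1-\delta)^k \sum_{\tilde\si \in M_k(\si)} \chi_{\alpha,0}(\tilde\si)$ of the main character theorem.

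The main obstacle will be the combinatorial bookkeeping in step two: one must track, for each reduced $\si$, which coordinate positions are frozen to the value $1$ (contributing factors of $\delta$), which are free to support the permutation action (contributing the factors $(1-\delta)^k$ together with the $\alpha$-weights inside $\chi_{\alpha,0}(\tilde\si)$), and how these combine to give the sum over $M_k(\si)$. Step one, while requiring care for relation~(5), is essentially formal once the correct indicator set is chosen; its correctness is forced by the identity $\mu_{\alpha'}(\{1\}) = \delta$, which is the realization-level manifestation of the new parameter.
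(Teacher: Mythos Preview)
Your approach is correct but genuinely different from the paper's. The paper works with Popova's presentation (Theorem~\ref{thm:gens&rels_Popova}) and realizes $p_1$ not as a multiplication operator but as the composition operator induced by the point map $(a_1,a_2,\dots)\mapsto(1,a_2,\dots)$ on $\mathcal{X}$; it then simply observes that the Popova relations hold for this map on sequences and that the resulting operator preserves $L^2(\wt{\mathcal{X}},m_{\alpha'})$ because that space is already the whole Hilbert space of the factor representation. No character computation is carried out.

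Your route --- Halverson's generators together with $\pi(p_i)=\text{multiplication by }\mathbf{1}_{\{x_1=\dots=x_i=1\}}$ --- is a different extension of the same $S_\infty$-representation: the operators are visibly self-adjoint idempotents, the check of relation~(5) that you sketch is correct, and the trace of $\pi(p_i)$ is immediately $m_{\alpha'}(\{x_1=\dots=x_i=1\})=\delta^{\,i}$, matching $\chi_{\alpha,0,\delta}(p_i)$. What each approach buys: the paper's composition operator is closer in spirit to the groupoid/dynamical picture of \cite{VK_factor} and needs only the single generator $p_1$, so the relation check is minimal; your multiplication operators make self-adjointness and the trace values transparent and let you verify the full character formula directly, which the paper does not do (it implicitly relies on the fact that, for $\beta=0$, the map $(\alpha,\delta)\mapsto\alpha'$ of Corollary~\ref{cor:chi|S_infty} is injective, so the $S_\infty$-restriction already pins down the $R_\infty$-character). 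Your step~two is sketchy as written, but the combinatorics you outline are exactly what is needed and there is no obstruction.
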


\begin{proof}[Proof]
Define the action of the projection $p_1$ from Theorem~\ref{thm:gens&rels_Popova}
as follows: it maps every sequence
$(a_1,a_2,a_3,\dots)\in\mathcal{X}$ to the sequence
$(1,a_2,a_3,\dots)\in\mathcal{X}$. The relations from
Theorem~\ref{thm:gens&rels_Popova} are obviously satisfied.

Thus it suffices to check that introducing an additional projection
does not lead beyond the space of the representation.
But, as shown in
\cite{Vershik-realizations}, the space of the factor representation of
the symmetric group $S_\infty$ coincides with the whole space
$L^2(\wt{\mathcal{X}}, m_{\alpha'})$, which completes the proof. \qed
\end{proof}

\begin{cor}
In terms of the realization described above, one can give a short
formula for the characters of $R_\infty$,
similar to the formula for the characters of the symmetric group
(cf.~\cite{VK_factor}), which expresses the value of a character
at an element $\si$ as the measure of the set of fixed points of $\si$; namely,
$$
\chi_{\alpha,0,\delta}(\si) = m_{\alpha'}(\{x:\si(x)=x\}),
$$
where $\alpha'$ is defined in Corollary~\ref{cor:chi|S_infty}. See also~\cite{Vershik-realizations}.
\end{cor}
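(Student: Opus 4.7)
The plan is to compute $m_{\alpha'}(\{x:\si(x)=x\})$ directly in the realization of Theorem~\ref{thm:realization} and match the result with the character formula proved above. First I would work out the action of an arbitrary element $\si\in R_n$ on the coordinate space $\mathcal{X}=\prod\mathbb{N}$. Composing the actions of the Popova generators---the projection $p_1$ sending $a_1\mapsto 1$ and the Coxeter transpositions swapping coordinates---one obtains the rule $(\si\cdot a)_j=a_{\si^{-1}(j)}$ when $j\in\si(\{1,\dots,n\})$ and $(\si\cdot a)_j=1$ otherwise, with $(\si\cdot a)_j=a_j$ for $j>n$.

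Next I would use the functional-graph decomposition of $\si$ into cycles of lengths $k_1,\dots,k_r$ (the invertible part) together with chains of total length $m=n(\si)-\sum_j k_j$. The equation $\si\cdot a=a$ decouples across components: along a chain $i_1\mapsto\cdots\mapsto i_\ell$ (with $i_1$ outside $\si(\{1,\dots,n\})$ and $i_\ell$ outside the domain of $\si$), the boundary condition $a_{i_1}=1$ propagates via $a_{i_s}=a_{i_{s-1}}$ to force each chain coordinate to equal $1$; on a cycle of length $k$, all coordinates must share a common value in $\mathbb{N}$. Using $\alpha'_1=\delta$ and $\alpha'_{i+1}=(1-\delta)\alpha_i$, one finds $\sum_i(\alpha'_i)^k=\delta^k+(1-\delta)^k\sum_i\alpha_i^k$, so by independence of the product measure
\[
m_{\alpha'}\bigl(\{x:\si(x)=x\}\bigr)=\delta^m\prod_{j=1}^r\Bigl(\delta^{k_j}+(1-\delta)^{k_j}\sum_i\alpha_i^{k_j}\Bigr).
\]

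Finally I would expand this product as a sum over subcollections $C\subseteq\{1,\dots,r\}$; each $C$ corresponds to an element $\tilde\si\in M_k(\si)$ with $k=\sum_{j\in C}k_j$ (by the parametrization of $M_k(\si)$ by subcollections of cycles recalled in the text), and Thoma's formula gives $\chi_{\alpha,0}(\tilde\si)=\prod_{j\in C}\sum_i\alpha_i^{k_j}$ since length-$1$ cycles of $\tilde\si$ contribute the trivial factor $\sum_i\alpha_i=1$. Collecting by $k$ reproduces exactly the formula from the main theorem. The principal obstacle is the first step: Theorem~\ref{thm:realization} specifies only $p_1$ and the Coxeter generators, so one must verify that the natural ``fill missing coordinates with $1$'' rule is well-defined on all of $R_\infty$ (i.e.\ independent of the Popova word representing $\si$); once this is in hand, the remaining measure computation and bookkeeping against $M_k(\si)$ are routine.
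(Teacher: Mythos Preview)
Your approach is sound, and in fact the paper gives no proof of this corollary at all: it is stated as a direct consequence of the realization in Theorem~\ref{thm:realization}, with the fixed-point interpretation of the trace taken for granted by analogy with the symmetric-group case and by reference to~\cite{VK_factor} and~\cite{Vershik-realizations}. Your explicit computation---splitting $\{1,\dots,n(\si)\}$ into the cycles of the invertible part and the remaining chains, observing that chain coordinates are forced to the value $1$ while each cycle forces a common value, and then expanding $\delta^m\prod_j\bigl(\delta^{k_j}+(1-\delta)^{k_j}\sum_i\alpha_i^{k_j}\bigr)$ over subcollections of cycles---correctly recovers the formula of the main theorem. Compared with the paper's bare citation, your argument has the advantage of being self-contained and of making transparent why the new parameter $\delta$ appears as the weight of the distinguished symbol~$1$.

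Regarding the ``principal obstacle'' you flag: this is already discharged by Theorem~\ref{thm:realization}, whose proof asserts that the Popova relations of Theorem~\ref{thm:gens&rels_Popova} hold for the generators $\si_1,\dots,\si_{n-1},p_1$ acting on $\mathcal{X}$. Since those relations present $R_n$, the action extends uniquely to all of $R_\infty$, and your ``fill missing coordinates with $1$'' rule is then automatically the action of a general $\si$ (one can see this by writing $\si$ as a product of a permutation, the idempotent $p_r$, and another permutation). So you may simply invoke Theorem~\ref{thm:realization} at that step rather than treat well-definedness as an open issue.
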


\section{Appendix. General information on finite inverse semigroups}

In this section, we mainly follow the monograph
\cite{CliffordPreston} and the paper \cite{Vershik_bialgebga}.

\subsection{The definition of an inverse semigroup}

\begin{thm}\label{thm:def_inverse-semigroup}
The following two conditions on a semigroup $S$ are equivalent:

(1) for every $a\in S$ there exists $x\in S$ such that $axa = a$, and
any two idempotents of  $S$ commute;

(2) every principal left ideal and every principal right ideal of
$S$ is generated by a unique idempotent;

(3) for every $a\in S$ there exists a unique $x\in S$ such that $axa =
a$ and $xax = x$.
\end{thm}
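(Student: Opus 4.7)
The plan is to prove (1)$\Rightarrow$(3), (3)$\Rightarrow$(1), (1)$\Rightarrow$(2), and (2)$\Rightarrow$(3), which together close all three equivalences. The first three arrows are short; the route from (3) back to (1), namely deriving commutativity of idempotents from uniqueness of inverses, will carry most of the work.

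For (1)$\Rightarrow$(3), given $a$ I pick $y$ with $aya=a$ from (1) and set $x:=yay$; a one-line substitution verifies $axa=a$ and $xax=x$. For uniqueness, if $x,y$ both satisfy the two relations, then $ax$, $ay$, $xa$, $ya$ are all idempotent (e.g.\ $(xa)^2=(xax)a=xa$), so they commute pairwise by (1). Inserting $aya=a$ inside $x=xax$ and commuting $xa$ past $ya$ gives $x=yax$; inserting $axa=a$ inside $x=xax$ and commuting $ax$ past $ay$ gives $x=xay$; the same manipulation with the roles of $x$ and $y$ swapped shows $y=xay$, hence $x=y$. For (1)$\Rightarrow$(2), the element $e:=aa^{-1}$ is idempotent and generates $aS^1$ since $a=aa^{-1}a=ea$ and $e=a\cdot a^{-1}\in aS^1$; and if $eS^1=fS^1$ for idempotents $e,f$, then $e=fu$ gives $fe=e$ and symmetrically $ef=f$, so commutativity of idempotents forces $e=f$. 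The principal left ideal case is symmetric.

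For (2)$\Rightarrow$(3), let $e$ be the unique idempotent generating $aS^1$, so $a=ea$ and $e=ax$ for some $x$. Setting $x':=xax$ and using the idempotency of $e=ax$, one gets $ax'=ax$ and then $x'ax'=x'$, giving existence. For uniqueness of the inverse, if $x$ and $y$ are two inverses of $a$, then $ax$ and $ay$ are both idempotents with $axS^1=aS^1=ayS^1$ (each lies in $aS^1$ and each satisfies $a=(ax)a$), so $ax=ay$ by the uniqueness in (2); the symmetric left-ideal argument gives $xa=ya$; concatenating, $x=xax=(xa)(ax)=(ya)(ay)=yay=y$.

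The main work is (3)$\Rightarrow$(1): the existence of $x$ with $axa=a$ is immediate, and the task is to show idempotents commute. Writing $a^{-1}$ for the unique inverse, every idempotent $e$ is self-inverse because $e\cdot e\cdot e=e$ exhibits $e$ as its own inverse, and uniqueness seals it. For $e,f$ idempotent, set $z:=(ef)^{-1}$; a short check shows that $fze$ is also an inverse of $ef$ in the sense of (3), so uniqueness forces $z=fze$. Then
$$z^2=(fze)(fze)=fz(ef)ze=f(zefz)e=fze=z,$$
so $z$ is idempotent, hence self-inverse, and $ef=(z)^{-1}=z$; thus $ef$ is idempotent, and by symmetry so is $fe$. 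A final expansion using $e^2=e$, $f^2=f$ and these idempotencies gives $(ef)(fe)(ef)=efef=ef$ and $(fe)(ef)(fe)=fefe=fe$, so $fe$ is an inverse of $ef$; uniqueness yields $ef=fe$. The delicate point — and the main obstacle — is the very first uniqueness application, namely recognizing the symmetric form $z=fze$; once that is in hand, the chain (idempotency of $z$, then of $ef$, then $fe$ as a second inverse) unrolls mechanically.
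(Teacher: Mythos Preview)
The paper does not actually prove this theorem: it is quoted in the Appendix as standard background on inverse semigroups, with the reader referred to Clifford--Preston. So there is no ``paper's own proof'' to compare against.

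That said, your argument is correct and is essentially the classical one found in the textbook literature. The cycle (1)$\Rightarrow$(3)$\Rightarrow$(1) together with (1)$\Rightarrow$(2)$\Rightarrow$(3) closes all equivalences, and each step checks out: the replacement $x:=yay$ to upgrade a weak inverse to an inverse, the commutation of the idempotents $ax,ay,xa,ya$ to force uniqueness, the use of the unique idempotent generator of $aS^1$ and $S^1a$ to pin down $ax$ and $xa$, and the key step in (3)$\Rightarrow$(1) showing $z=(ef)^{-1}$ satisfies $z=fze$ and hence is idempotent. Two small points you might make explicit: in (2)$\Rightarrow$(3), when you write ``$e=ax$ for some $x$'' from $e\in aS^1$, the case $e=a\cdot 1$ (i.e.\ $a$ itself idempotent) is covered by taking $x=a\in S$, so there is no hidden use of an identity element; and your proof of (1)$\Rightarrow$(2) invokes $a^{-1}$, which is legitimate only because you have already established (1)$\Rightarrow$(3). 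Neither of these is a gap, but spelling them out would make the write-up self-contained.
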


A semigroup satisfying the conditions of Theorem~\ref{thm:def_inverse-semigroup}
is called an
\textit{inverse semi\-group}. One says that the elements $a$ and $x$ from
condition~(1) of the theorem are
\textit{inverse} to each other; sometimes, this is denoted as
$x = a^{-1}$. Note that
$(ab)^{-1} = b^{-1}a^{-1}$ for any $a,b\in S$.

Let us prove that the symmetric inverse semigroup is an inverse
semigroup. Given a partial map $\si\in R_n$ that acts from a subset
$X\subset \{1,\dots,n\}$ to a subset
$Y\subset \{1,\dots,n\}$, we construct the map $\si^{-1}$ from $Y$ to
$X$ inverse to $\si$ in the ordinary sense, i.e., for
$y\in Y$ and $x\in X$ we set $\si^{-1}(y) = x$ if $\si(x) = y$. The
elements $\si$ and
$\si^{-1}$ are obviously inverse to each other. Besides, the idempotents of
the symmetric inverse semigroup are exactly those maps that send some
subset $X\subset \{1,\dots,n\}$ to itself and are not defined on
$\{1,\dots,n\} \backslash X$. Therefore, any two idempotents commute,
and the semigroup is inverse by Theorem~\ref{thm:def_inverse-semigroup}.

\subsection{An analog of Cayley's theorem} 
Vagner \cite{Vagner} and Preston \cite{Preston} proved for inverse semigroups an analog of
Cayley's theorem for groups.

\begin{thm}
An arbitrary inverse semigroup $S$ is isomorphic to an inverse
subsemigroup of the symmetric inverse semigroup of all one-to-one
partial trans\-for\-ma\-tions of the set $S$.
\end{thm}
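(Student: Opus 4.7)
The plan is to mimic the classical Cayley embedding by associating, to each $a\in S$, not a bijection of $S$ but a partial bijection of $S$ given by (left) multiplication by $a$ on the largest subset where this multiplication is one-to-one. Concretely, for each $a\in S$, set
$$
D_a \;=\; a^{-1}aS \;=\; \{x\in S : a^{-1}ax=x\},
$$
and define $\phi_a\colon D_a\to S$ by $\phi_a(x)=ax$. The goal is to show that $a\mapsto\phi_a$ is a well-defined injective homomorphism from $S$ into the inverse semigroup of partial one-to-one self-maps of $S$.

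First I would check that $\phi_a$ is a partial bijection of $S$. Using the inverse semigroup axiom $a a^{-1} a = a$ (and the commutativity of the two idempotents $a^{-1}a$ and $aa^{-1}$), one verifies that $\phi_a$ maps $D_a$ bijectively onto $D_{a^{-1}}=aa^{-1}S$, with two-sided inverse $\phi_{a^{-1}}$: given $x\in D_a$, applying $a^{-1}$ to $ax$ and using $a^{-1}ax=x$ recovers $x$; conversely, $y\in aa^{-1}S$ has preimage $a^{-1}y\in D_a$. Hence each $\phi_a$ lies in the symmetric inverse semigroup on $S$.

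Next, I would show that $\phi_a\circ\phi_b=\phi_{ab}$. The nontrivial point is that the domain of the composition,
$$
\{x\in b^{-1}bS : bx\in a^{-1}aS\},
$$
equals the domain $(ab)^{-1}(ab)S = b^{-1}a^{-1}ab\,S$ of $\phi_{ab}$. Here one uses heavily that idempotents in $S$ commute: writing the condition $a^{-1}abx=bx$ and multiplying on the left by $b^{-1}$ (or using $b^{-1}b$ as an idempotent commuting with $b^{-1}a^{-1}ab$), one rearranges to $b^{-1}a^{-1}abx=x$, and the reverse implication is analogous. On the common domain the values agree trivially: $\phi_a(\phi_b(x))=a(bx)=(ab)x=\phi_{ab}(x)$. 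This is the step I expect to be the main obstacle, because it is the one where the full strength of the inverse semigroup axioms (unique inverses, commuting idempotents) has to be used to equate two concretely defined subsets of $S$.

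Finally, I would verify injectivity of $a\mapsto\phi_a$. If $\phi_a=\phi_b$, the domains $a^{-1}aS$ and $b^{-1}bS$ are equal principal right ideals, so by the uniqueness of the idempotent generator from Theorem~\ref{thm:def_inverse-semigroup}(2), $a^{-1}a=b^{-1}b$. Since this idempotent lies in both domains, evaluating gives $a=\phi_a(a^{-1}a)=\phi_b(a^{-1}a)=b(b^{-1}b)=b$. Together with the previous two steps, this shows that $a\mapsto\phi_a$ is an isomorphism of $S$ onto an inverse subsemigroup of the symmetric inverse semigroup on the set $S$, completing the proof.
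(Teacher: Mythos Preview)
Your argument is correct and is precisely the classical Vagner--Preston representation, carried through carefully: the map $a\mapsto\phi_a$ with $\phi_a\colon a^{-1}aS\to aa^{-1}S$, $x\mapsto ax$, is a faithful homomorphism of $S$ into the symmetric inverse semigroup on $S$. The only place where real work is hidden is the reverse inclusion $D_{ab}\subset\{x\in D_b:\phi_b(x)\in D_a\}$, which you label ``analogous'': there one multiplies $b^{-1}a^{-1}abx=x$ on the left by $b$ and then uses that the idempotents $bb^{-1}$ and $a^{-1}a$ commute to conclude $a^{-1}abx=bx$. Since you explicitly flagged commutativity of idempotents as the essential ingredient, this is not a gap.

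As for comparison with the paper: the paper does not actually give a proof of this theorem. Immediately after the statement it says ``The proof is much more difficult than in the group case, and we do not reproduce it (see~\cite{CliffordPreston}).'' What you have written is essentially the argument one finds in that reference (and in Vagner's and Preston's original papers), so in that sense your approach coincides with what the paper defers to. There is no alternative route to compare against.
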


The proof is much more difficult than in the group case, and we do
not reproduce it (see~\cite{CliffordPreston}). Note that the theorem
holds both for finite and infinite inverse semigroups.

\subsection{The semisimplicity of the semigroup algebra} 
Given an arbitrary finite semigroup $S$ and a field
$F$, one can consider the semigroup algebra
$F[S]$ of $S$ over $F$. The elements of $S$ form a basis in $F[S]$, and
the multiplication law for these basis elements coincides with the
multiplication law in $S$. Necessary and sufficient conditions for the
semisimplicity of the semigroup algebra $F[S]$ of a finite inverse semigroup $S$
were obtained independently by Munn
\cite{Munn_inverse=semisimple} and Oganesyan
\cite{Oganesyan}.

\begin{thm}\label{thm:inverse_semisimple}
The semigroup algebra $F[S]$ of a finite inverse semigroup $S$ over a
field $K$ is semisimple if and only if the characteristic of $K$
is zero or a prime that does not divide the order of any subgroup in $S$.
\end{thm}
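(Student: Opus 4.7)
The plan is to generalize Munn's decomposition of $\mathbb{C}[R_n]$ (Theorem~\ref{thm:CC[R_n] isom oplus}) from the symmetric inverse semigroup to an arbitrary finite inverse semigroup $S$: once such a decomposition into matrix algebras over group algebras is obtained, the statement follows block by block from Maschke's theorem.

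First I would invoke the general structure theory of finite inverse semigroups to produce a principal series
$$
S = I_0 \supset I_1 \supset \cdots \supset I_m = \varnothing
$$
of two-sided ideals whose successive Rees quotients $I_k/I_{k+1}$ are completely $0$-simple inverse semigroups, i.e., Brandt semigroups $B(H_k, n_k)$. Here $H_k$ is the maximal subgroup attached to any idempotent in the corresponding $\mathcal{D}$-class, and $n_k$ is the number of idempotents in that class; by Munn's realization $B(H_k,n_k)$ is the semigroup of $n_k\times n_k$ matrices over $H_k\sqcup\{0\}$ with at most one nonzero entry. Applying the second Munn theorem stated in the body of the paper, the contracted semigroup algebra $F_0[B(H_k,n_k)]$ is isomorphic to $M_{n_k}(F[H_k])$. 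Consequently the ideal filtration $F[I_k]\subset F[S]$ has successive quotients $M_{n_k}(F[H_k])$.

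Now the equivalence follows. By Maschke's theorem each $F[H_k]$, and hence each matrix block $M_{n_k}(F[H_k])$, is semisimple precisely when $\mathrm{char}(F)\nmid |H_k|$. Since every subgroup of $S$ is contained in some maximal subgroup (an element $g$ of a subgroup satisfies $g=eg=ge$ for its unique idempotent $e=gg^{-1}$), the arithmetic hypothesis on $\mathrm{char}(F)$ is equivalent to the semisimplicity of every block $M_{n_k}(F[H_k])$. When this holds, the filtration splits because each successive quotient is semisimple projective, yielding $F[S]\cong \bigoplus_k M_{n_k}(F[H_k])$, which is semisimple. Conversely, if some $|H_k|$ is divisible by $\mathrm{char}(F)$, the corresponding block has a nonzero Jacobson radical, and this lifts back to a nonzero radical in $F[S]$.

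The main obstacle is the splitting of the ideal filtration in the semisimple direction. This is where the defining property of inverse semigroups --- commutativity of idempotents --- plays the decisive role: one constructs an explicit system of orthogonal central idempotents realizing the decomposition via M\"obius inversion on the semilattice $E(S)$ of idempotents, passing from the natural semigroup basis of $F[S]$ to a basis labeled by $\mathcal{D}$-classes. A subsidiary technical point is to distinguish the \emph{contracted} from the \emph{ordinary} semigroup algebra at each step of the filtration, which is harmless because the zero of the Rees factor $I_k/I_{k+1}$ is absorbed into $I_{k+1}$.
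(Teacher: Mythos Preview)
The paper does not actually prove Theorem~\ref{thm:inverse_semisimple}: it is stated in the Appendix as a result from the literature, attributed to Munn and Oganesyan, with no argument given. So there is no ``paper's own proof'' to compare against.

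Your sketch is essentially the classical Munn argument and is sound in outline. One point deserves a sharper justification: the claim that the ideal filtration splits ``because each successive quotient is semisimple projective'' is not automatic --- an algebra with a semisimple ideal and semisimple quotient need not itself be semisimple (think of upper triangular $2\times 2$ matrices). You correctly flag this as the main obstacle and propose M\"obius inversion on $E(S)$ to produce the orthogonal central idempotents; that is indeed the clean modern route (Steinberg's approach), and it works over any base ring. Alternatively, Munn's original argument uses that each bottom ideal $F_0[I_{m-1}] \cong M_{n}(F[H])$ is a unital ring in its own right, so its identity is a central idempotent of $F[S]$ splitting off that block; one then inducts up the series. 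Either way, you should make the splitting mechanism explicit rather than appealing to ``projective''.

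For the converse, your lifting of the radical is fine once phrased correctly: choose the \emph{minimal} ideal $I_{m-1}$ whose maximal subgroup order is divisible by $\mathrm{char}(F)$; then $\mathrm{rad}\bigl(M_{n_{m-1}}(F[H_{m-1}])\bigr)$ is a nonzero nilpotent ideal of $F[S]$ itself, hence lies in $\mathrm{rad}(F[S])$.
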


\subsection{Involutive bialgebras and semigroup algebras of inverse semi\-gro\-ups}
A \textit{bialgebra} (see~\cite{Kassel}) is a vector space over the
field ${\mathbb{C}}$ equipped with compatible structures of a unital associative
algebra and a counital coassociative coalgebra.
Name\-ly, the following equivalent conditions are satisfied:

(1) the comultiplication and the counit are homomorphisms of the
corresponding algebras;

(2) the multiplication and the unit are homomorphisms of the
corresponding coalgebras.

Let us also introduce the notion of a
\textit{weakened bialgebra} for the case where
the multiplication and comultiplication are homomorphisms, but there
is no condition on the unit and counit.

The group algebra of a finite group with the convolution
multiplication
and diagonal comultiplication is a cocommutative bialgebra (and even a
Hopf algebra). It is well known  (see \cite{Kassel}) that the
semigroup algebra of every finite semigroup with identity (monoid) is
also a cocommutative bialgebra with the natural definition of the operations.

An involution of an algebra is a second-order antilinear
antiautomorphism of this algebra; a second-order antilinear
antiautomorphism of a coalgebra is called a coinvolution. A bialgebra
equipped with an involution and a coinvolution is called an
\textit{involutive bialgebra}, or a bialgebra with involution, if the
multiplication commutes with the coinvolution and the comultiplication
commutes with the involution.

In \cite{Vershik_bialgebga} it was shown that the class of finite
inverse semigroups generates exactly the class of involutive
semisimple bialgebras.

\begin{thm}
The semigroup algebra of a finite inverse semigroup is a semi\-sim\-ple
cocommutative  involutive algebra. Analogously, the dual semigroup
algebra ${\mathbb{C}}[S]$ of a finite inverse semigroup $S$ with identity is a
commutative involutive bialgebra. Conversely, every finite-dimensional
semisimple cocommutative (in the dual case, commutative) involutive
bialgebra is isomorphic (as an involutive bialgebra) to the semigroup
algebra (respectively, dual semigroup
algebra) of a finite inverse semigroup with identity.

For inverse semigroups without identity, the semigroup bialgebra is a
weakened bialgebra (the counit is not a homomorphism).
\end{thm}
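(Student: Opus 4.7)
The plan is to prove both directions of the correspondence, with the commutative statement about the dual semigroup algebra following by dualizing every construction. For the forward direction, given a finite inverse semigroup $S$ with identity, I put on $\mathbb{C}[S]$ the diagonal comultiplication $\Delta(s) = s \otimes s$ and counit $\epsilon(s) = 1$, which is manifestly cocommutative and multiplicative on basis elements, making $\mathbb{C}[S]$ a cocommutative bialgebra. Semisimplicity is Theorem~\ref{thm:inverse_semisimple}. The involution and coinvolution are both defined on the basis by $s^{*} = s^{-1}$ and extended conjugate-linearly: the antiautomorphism property $(st)^{*} = t^{*} s^{*}$ reduces to the identity $(st)^{-1} = t^{-1}s^{-1}$ valid in any inverse semigroup, and the required compatibility between the involution and $\Delta$ reduces, via cocommutativity, to the trivial identity $\Delta(s^{-1}) = s^{-1} \otimes s^{-1}$. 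Dualizing the whole package produces the commutative-case statement for $\mathbb{C}^S$ with pointwise multiplication, convolution-dual comultiplication $\Delta(\delta_s) = \sum_{ab=s} \delta_a \otimes \delta_b$, and involution $(\delta_s)^{*} = \delta_{s^{-1}}$.

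The converse is where the real content lies. Let $B$ be a finite-dimensional semisimple cocommutative involutive bialgebra, and set $M = G(B) = \{g \in B : \Delta(g) = g \otimes g,\ \epsilon(g) = 1\}$. Then $M$ is a finite monoid containing the unit, is linearly independent in $B$, and is closed under the involution: the identity $\Delta(g^{*}) = g^{*} \otimes g^{*}$ for group-like $g$ follows from the involution/comultiplication compatibility together with cocommutativity. The principal step is to show that $M$ actually spans $B$, so that $B \cong \mathbb{C}[M]$ as involutive bialgebras. Dually, this amounts to proving that the commutative algebra $B^{*}$ (the linear dual) is reduced, with spectrum in bijection with $M$. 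This is the main obstacle of the proof, and it is where both hypotheses are essential: the involution on $B$ transports to an involution on $B^{*}$, and combined with the positive-definite Hermitian form on $B$ built from the reduced trace and the involution via the semisimple structure, it rules out any nilradical in $B^{*}$ and forces $B^{*} \cong \mathbb{C}^{|M|}$ as an algebra.

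Once $B = \mathbb{C}[M]$ is established, the involution on $B$ restricts to a monoid anti-involution $m \mapsto m^{*}$ on $M$, and it remains to show that this endows $M$ with the structure of an inverse semigroup. I would verify one of the equivalent conditions of Theorem~\ref{thm:def_inverse-semigroup}, namely that the idempotents of $M$ commute and every element has an inverse; both follow from the semisimplicity of $\mathbb{C}[M]$ together with the monoid involution, with $m^{*}$ serving as the candidate inverse and semisimplicity ruling out the noncommuting-idempotent configurations that (as the elementary computation $(e-f)^2 = 0$ for $ef=e$, $fe=f$ illustrates) would generate a nonzero Jacobson radical in $\mathbb{C}[M]$. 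The concluding assertion on semigroups without identity is a direct verification: in that case the extended counit $\epsilon(s) = 1$ fails to be multiplicative at the distinguished zero element that appears, so the counit-homomorphism axiom is lost while the remaining bialgebra compatibilities survive, yielding precisely a weakened bialgebra in the sense defined above.
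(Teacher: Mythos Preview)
The paper does not actually prove this theorem; it is stated in the Appendix as a result quoted from~\cite{Vershik_bialgebga}, with no argument given. So there is no in-paper proof to compare your proposal against.

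Evaluating your proposal on its own merits: the forward direction is fine, and you correctly identify that the substance lies in the converse. However, your sketch of the key spanning step has a real gap. You want to show that the commutative algebra $B^{*}$ (with multiplication dual to the comultiplication on $B$) is reduced, so that group-likes span $B$. But the positive-definite Hermitian form you invoke --- built from the reduced trace and the involution --- lives on $B$ with its \emph{original} algebra structure, not on $B^{*}$. Semisimplicity of $B$ as an algebra says nothing directly about semisimplicity of $B^{*}$, whose multiplication is entirely different. Note also that the paper's definition of an involutive bialgebra involves \emph{two} antilinear maps, an involution and a coinvolution, with separate compatibility conditions; your proof treats them as one object. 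It is the coinvolution on $B$ that dualizes to a $*$-structure on the algebra $B^{*}$, and that is what you need to exploit (together with some positivity input) to kill the nilradical of $B^{*}$. As written, the argument conflates the two structures and does not close.

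The final step --- that the monoid $M$ of group-likes is inverse --- is also underargued. Your $(e-f)^{2}=0$ computation handles only the specific configuration $ef=e$, $fe=f$; it does not show that arbitrary idempotents of $M$ commute, nor does it establish the regularity identity $m m^{*} m = m$ from the involution and semisimplicity alone. Both claims are true in the end, but they require more than what you have written; the cited paper~\cite{Vershik_bialgebga} supplies the missing arguments.
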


\bigskip
Translated by N.~V.~Tsilevich.

%

\end{document}